\documentclass[11pt]{article}
\usepackage{amsmath,amssymb,amsthm}
\usepackage[margin=1in]{geometry}
\usepackage{color}
\usepackage{hyperref}

\numberwithin{equation}{section}
\theoremstyle{plain}                
\newtheorem{theorem}{Theorem}[section]
\newtheorem{lemma}[theorem]{Lemma}
\newtheorem{proposition}[theorem]{Proposition}
\newtheorem{corollary}[theorem]{Corollary}
\theoremstyle{definition}           
\newtheorem{definition}[theorem]{Definition}

\newtheorem{remark}[theorem]{Remark}

\providecommand{\R}{} \renewcommand{\R}{{\mathbb R}}

\newcommand{\N}{{\mathbb N}}

\renewcommand{\AA}{{\mathcal A}}

\newcommand{\tw}{{\widetilde{w}}}
\newcommand{\tv}{{\widetilde{v}}}

\newcommand{\eps}{\varepsilon}

\def\R{\mathbb{R}}

\def\E{\mathbb{E}}
\def\P{\mathbb{P}}

\def\R{\mathbb{R}}
\def\N{\mathbb{N}}

\def\P{\mathbb{P}}
\def\E{\mathbb{E}}

\def\rr{\mathbb{R}}

\def\lp{L^p_a(\rr)}
\def\wi{W^{1,\infty}_a(\rr)}
\def\li{L^\infty_a(\rr)}
\def\eps{\varepsilon}

\begin{document}

\title{Optimal Investment with Switching Preferences}
\author{Yu-Jui Huang\thanks{%
Department of Applied Mathematics, University of Colorado, Boulder, CO 80309-0526, USA, email: \texttt{yujui.huang@colorado.edu}
} \and Liviu Ignat\thanks{%
Institute of Mathematics ``Simion Stoilow'' of the Romanian Academy, 21 Calea Grivitei Street, 010702 Bucharest, Romania.
 National University of Science and Technology Politehnica Bucharest, 313 Splaiul Independen\c tei, 060042 Bucharest, Romania.
 Academy of Romanian Scientists, Ilfov Street, no. 3, Bucharest, Romania, email: \texttt{liviu.ignat@gmail.com}} 
 \and Traian A. Pirvu\thanks{%
Department of Mathematics and Statistics, McMaster University, 1280 Main Street West, Hamilton, ON, L8S 4K1, Canada, email: \texttt{pirvut@mcmaster.ca}} 
\and Reihaneh Vafadar\thanks{%
Department of Mathematics and Statistics, McMaster University, 1280 Main Street West, Hamilton, ON, L8S 4K1, Canada, email: \texttt{vafadar@mcmaster.ca}}
}
\maketitle

\begin{abstract}
Major life events can significantly increase individuals' risk aversion over a sustained period of time, as empirical studies reveal. How such an event-triggered shift of risk preferences impacts optimal investment is the focus of this paper. On a finite time horizon where a major life event may occur independently of the financial market, an investor aims to maximize her expected utility from terminal wealth while foreseeing a potential change in her risk aversion. We find that the associated Hamilton--Jacobi--Bellman (HJB) equation involves the post-event optimal value function (under elevated but fixed risk aversion after the event's occurrence), and the Fenchel--Legendre transform fails to linearize this HJB equation: it yields a parabolic equation with a fully nonlinear term, induced precisely by the post-event optimal value function. Through a combination of fixed-point, compactness, and verification arguments,  
we establish the existence of a positive convex classical solution with suitable growth to the fully-nonlinear parabolic equation. The convex conjugate of this solution is shown to satisfy the HJB equation and coincides with the pre-event optimal value function. The optimal trading strategy is obtained by concatenating the optimal pre-event and post-event strategies---the former is expressed in terms of the solution to the HJB equation and the latter is traditional Merton's ratio.
\end{abstract}

\noindent {\bf Key words:} Portfolio optimization, Stochastic Control, Nonlinear Partial Differential Equations

\begin{flushleft}
{\bf JEL classification: }{G11}\\
{\bf Mathematics Subject Classification (2000): }
{91B30, 60H30, 60G44}
\end{flushleft}

\section{Introduction}\label{sec:intro}
Risk preferences may change due to major life events,  
such as parenthood and bereavement (Kettlewell \cite{Kettlewell19}; G\"{o}rlitz and Tamm \cite{GT20}; Browne et al.\ \cite{BJRS22}), health shocks (Decker and Schmitz \cite{DS16}), exposure to violent crime (Voors et al.\ \cite{Voors12}; Brown et al.\ \cite{BMTV19}), and experiencing natural disasters 
(Cameron and Shah \cite{CS15}; Cassar et al.\ \cite{CHvK17}; Hanaoka et al.\ \cite{HSW18}). Empirical findings in these studies indicate that many major life events tend to significantly increase individuals' risk aversion, and this more conservative stance  can last several years or much longer. 

This paper aims to elucidate how an event-triggered shift of risk preferences impacts optimal investment. Specifically, in a Black--Scholes market, we consider an investor who chooses a trading strategy to maximize her expected utility from terminal wealth over a fixed time horizon $T>0$, while a major life event may occur (independently of the financial market) before the terminal time $T$. We assume that the investor uses an isoelastic utility function $U(x) = \frac{x^{1-\gamma}}{1-\gamma}$, where $0<\gamma\neq 1$ represents her risk aversion, and the occurrence of a major life event will shift the level of risk aversion from $\gamma_1$ to $\gamma_2$ (with $\gamma_1<\gamma_2$). Our goal is to characterize the investor's optimal trading strategy under this potential change of risk aversion over the finite time horizon $T$. For tractability, we assume that the investor's risk aversion will stay at the elevated level $\gamma_2$ after the major life event, which is in fact consistent with the aforementioned empirical studies over a short to intermediate investment horizon $T$. 

In the standard case of fixed risk aversion, it is well-known that the Fenchel--Legendre transform {\it linearizes} the Hamilton--Jacobi--Bellman (HJB) equation associated with the optimal investment problem, which significantly facilitates the characterization of an optimal trading strategy. This simplification does not fully work in our case. After the occurrence of a major life event, as the investor has a fixed level of risk aversion $\gamma_2$, the standard linearization works and it gives explicit formulas for the {\it post-event} optimal value function, denoted by $v(t,x,2)$, and trading strategy. Before the occurrence of the major life event, since the investor foresees the possible change of her risk aversion from $\gamma_1$ to $\gamma_2$, the HJB equation for the {\it pre-event} optimal value function, denoted by $v(t,x,1)$, depends on $v(t,x,2)$. While one can apply the Fenchel--Legendre transform to this HJB equation, the resulting equation (i.e., \eqref{HJB3} for the transformed value function $w(t,y)$) still contains a fully nonlinear term, induced precisely by $v(t,x,2)$ in the HJB equation. 

The fully nonlinear parabolic equation \eqref{HJB3}, compared with those commonly seen in mathematical finance, has a distinctive feature: a {\it mismatch} between the involved nonlinearlity and the terminal condition. As detailed at the beginning of Section~\ref{sec:dual}, if we take advantage of the specific structure of the equation (in the $y$ variable particularly), we can effectively simplify the nonlinearity and obtain a closed-form solution, which, however, violates the terminal condition; if we instead maintain the terminal condition in the first place, there is then no easy way to directly simplify the equation. To circumvent this, we ultimately construct a solution $w$ to \eqref{HJB3} from scratch, relying on fixed-point and compactness arguments. We first consider a regularized version of \eqref{HJB3}, whose solutions can be expressed as fixed points of an integral operator, defined on the space of continuous functions from $[0, T]$ to a suitable weighted Sobolev space. By showing that this operator is a contraction mapping, we obtain a solution to the regularized equation. As regularization diminishes, careful estimates ensure that the solution to the regularized equation converges to a solution $w$ to \eqref{HJB3}, on the strength of the Aubin--Lions
compactness theorem, and $w$ inherits many desirable properties (e.g., monotonicity and growth conditions) from the regularized solutions; see Theorem~\ref{th.for.w} for details. 

Interestingly, this solution $w$ to \eqref{HJB3} has a stochastic control characterization: it coincides with the value function of an optimal consumption problem with distinct intermediate and terminal utility functions, which are both isoelastic but have different exponents $q_1,q_2<0$; see \eqref{J^c} for the specifics. It is known in the literature that a negative exponent in an isoelastic utility function poses technical challenges to the standard verification argument. To overcome this, some employ an ``$\eps$-modification'' to the wealth dynamics (see e.g., Jane\v{c}ek and S\^{i}rbu \cite{JS12}, Guasoni and Huang \cite{GH19}), which however works for only additive or deterministic wealth processes (while our dual wealth process $Y$, given by \eqref{Y}, is stochastic and multiplicative); some others choose to focus on a smaller, more restrictive set of control strategies (see e.g., Aurand and Huang \cite{AH23}) for which the technical challenges can be avoided. In our case, we first choose to work with a smaller set of consumption strategies, i.e., those that are bounded, which allows us to perform the verification; next, by a detailed analysis of our dual wealth process, we show that the expected utility induced by any general (unbounded) consumption strategy can be achieved by a sequence of bounded ones, so that our verification result in fact extends to all general consumption strategies. This then proves the stochastic control characterization of $w$ (see Theorem~\ref{thm:convexity of tw}), which crucially implies the strict convexity of $y\mapsto w(t,y)$ (see Corollary~\ref{coro:convexity}).   

Thanks to this convexity, we can rigorously define the Fenchel--Legendre transform $\tv(t,x)$ of $w(t,y)$. By translating the properties of $w(t,y)$ in Theorem~\ref{th.for.w} into those of $\tv(t,x)$, we find that $\tv(t,x)$ is a classical solution to the {\it pre-event} HJB equation and has sublinear growth in $x$ (see Lemma~\ref{lem:tv}). Relying on this sublinear growth property,  we prove that the entire path of the optimal wealth process, when raised to any power within $(0,1)$, is uniformly integrable (see Proposition~\ref{prop:UI}). Such uniform integrability allows us to rigorously perform a verification argument, which shows that $\tv(t,x)$ coincides with the {\it pre-event} optimal value function $v(t,x,1)$ and the optimal trading strategy is the concatenation of a pre-event strategy (expressed in terms of derivatives of $\tv(t,x)$) and a post-event strategy (which is a constant, i.e., the standard Merton ratio); see Theorem~\ref{th.for.u}, the main result of this paper. 

Let us point out that the way we model switching risk preferences share similar spirit with traditional regime-switching models, but there is a key difference. Traditional regime-switching models focus on the modeling of changing investment opportunities, and their goal is to investigate how investors should adjust their trading strategy according to the changing market conditions; see e.g., Zhou and Yin~\cite{ZY}, Sotomayor and Cadenillas~\cite{Cad}, Pirvu and Zhang~\cite{PZ0, PZ}, Kwak, Pirvu, and Zhang~\cite{KPZ}, and Yao, Zhang, and Zhou~\cite{YZY}, among others. This paper, by contrast, models the changing risk preferences independently from the market conditions. In fact, the market conditions do not change in our model---it is a major life event that fundamentally reshapes investor's' risk aversion, which consequently alters their trading even when market conditions stay unchanged.

The rest of the paper is organized as follows. Section~\ref{sec:model} presents our model and derives associated HJB equations and a related dual equation under the Fenchel--Legendre transform. Section~\ref{sec:dual} analyzes the dual equation in detail, showing that it admits a positive convex classical solution with suitable growth. Section~\ref{sec:main} proves that the convex conjugate of the solution in Section~\ref{sec:dual} is exactly the value function of our optimal investment problem and derives an optimal trading strategy. Appendix~\ref{sec:construct w} is devoted to a detailed construction underlying results in Section~\ref{sec:dual}.

\section{The Model}\label{sec:model}

Let $(\Omega,\mathcal{F},\{\mathcal{F}_t\}_{t\ge 0},\P)$ be a filtered probability space where the filtration $\{\mathcal{F}_t\}_{t\ge 0}$ fulfills the usual conditions. Suppose that a standard Brownian motion $W$ and a random variable $\theta$ with an exponential law 
\begin{equation}\label{theta}
\P(\theta>t) = e^{-\lambda t}\quad \forall t\ge 0,\qquad \hbox{for some $\lambda>0$}, 
\end{equation}
are both defined on this probability space and they are independent of each other. 

Consider a financial market with
a risky asset $S$ (called the stock) whose price dynamics is modeled by a geometric Brownian motion 
 $$
dS_t=S_t\left[\mu\,dt +\sigma\,dW_t\right],\quad t\ge 0,
$$
where $\mu, \sigma>0$ are given constants. The market also has a risk-free asset whose value grows at a risk-free rate $r\ge 0$. Without loss of generality, we will assume throughout the paper that $r=0$.

Let $T>0$ be a fixed investment horizon. For any current time $t\in[0,T]$ and current wealth $x>0$, an investor intends to decide the proportion of her wealth, denoted by $\zeta_u$, to be invested in the stock $S$ at every time $s\in [t,T]$. The resulting wealth process is then given by  
\begin{align}
\label{equ:wealth-one}
d X^\zeta_u = X^\zeta_u\zeta_u \big(  \mu\, du+\sigma\, dW_u\big),\ \ u\in [t,T],\quad X^\zeta_t =x. 
\end{align}
We say the process $\{\zeta_u\}_{u\in [t,T]}$ is admissible if it satisfies suitable integrability, as defined below. 

\begin{definition}\label{def:portfolio-proportions}
For any $t\in[0,T]$, we say a real-valued progressively measurable process $\{\zeta_u\}_{u\in [t,T]}$ is {\em $t$-admissible} if 

      $\int_t^T \zeta^2_u\, du<\infty$ $\P$\text{-a.s.} 

We denote by $\mathcal A(t)$ the set of all such processes. 
\end{definition}

\begin{remark}
For any $t\in[0,T]$ and $\zeta\in\AA(t)$, \eqref{equ:wealth-one} admits a unique strong solution, given by 
\begin{equation}%
\label{equ:wealth-two}
    \begin{split}
      X_s= x \exp\left( \int_t^s\Big(\mu\zeta_u-\frac{\sigma^2\zeta^2_u}{2}\Big)\, du+\sigma \int_t^s \zeta_u \, dW_u \right),\ \ s\in[t,T].
    \end{split}
\end{equation}
\end{remark}

A major life event (e.g., parenthood, bereavement, a health shock, exposure to violent crime, and experiencing a natural disaster, etc.) may occur to the investor, independently to the financial market, and the (random) occurrence time is represented by the random variable $\theta$ in \eqref{theta}. As motivated in Section~\ref{sec:intro}, we stipulate that the major life event, once it occurs, will shift the investor's risk aversion to a higher level. Specifically, given 
\begin{equation}\label{p_1,p_2}
p_1,p_2\in (0,1)\quad \hbox{satisfying}\quad p_1>p_2, 
\end{equation}
we assume that the investor uses the isoelastic utility function 
\[
U(x,i)=\frac{x^{p_{i}}}{p_{i}},\quad i=1,2, 
\]
to evaluate her terminal wealth $X_T$, where ``$i=1$'' represents the case of $\theta>T$ (i.e., the major life event does not occur by time $T$, whence the investor maintains the usual risk aversion level $1-p_1$ at time $T$) while ``$i=2$'' represents the case of $\theta\le T$ (i.e., the major life event occurs by time $T$, whence the investor has a higher risk aversion level $1-p_2$ at time $T$). 

To facilitate subsequent analysis, let us introduce a two-state continuous-time Markov chain $\{\epsilon_t\}_{t\ge 0}$ that satisfies: (i) $\epsilon_t$ may switch from the state ``$i=1$'' to the other state ``$i=2$'', and the change time is $\theta$ as in \eqref{theta}; (ii) once $\epsilon_t$ is at the state ``$i=2$'', it will stay there forever. In other words, $\{\epsilon_t\}_{t\ge 0}$ has a rate matrix given by 
\begin{equation}\label{rate matrix}
A=[a_{ij}]_{i,j=1,2}\quad  \hbox{with}\quad a_{11} =-\lambda,\ a_{12}=\lambda,\ \hbox{and}\ a_{21}=a_{22} = 0. 
\end{equation}

For any current time $t\in [0,T]$, current wealth $x>0$, and current state $i\in\{1,2\}$, the investor looks for a portfolio process $\zeta^*\in\AA(t)$ that maximizes her expected utility from terminal wealth, i.e., attains the value 
\begin{equation} \label{v}
v(t,x,i):=\sup_{\zeta\in\AA(t)}\mathbb{E}^{t,x,i}\left[U\left(X^{\zeta}_T,\epsilon_T\right)\right], 
\end{equation}
where $\E^{t,x,i}$ denotes an expectation conditioned on $X^\zeta_t=x$ and $\epsilon_t=i$.

\subsection{The Hamilton--Jacobi--Bellman Equations}
Fix any current time and wealth $(t,x)\in[0,T]\times[0,\infty)$. If $i=\epsilon_t=2$, we have $\epsilon\equiv 2$ on the interval $[t,T]$, thanks to \eqref{rate matrix}. Hence, by \eqref{v}, 
\begin{equation}\label{v_2}
v(t,x,2) = \sup_{\zeta\in\AA(t)}\mathbb{E}^{t,x,2}\left[U\left(X^{\zeta}_T,2\right)\right]  = \sup_{\zeta\in\AA(t)}\mathbb{E}^{t,x}\left[\frac{(X^{\zeta}_T)^{p_2}}{p_2}\right],
\end{equation}
which is the classical Merton problem of optimal investment over the finite time horizon $T$. It is well-known that $v(t,x,2)$ satisfies the Hamilton--Jacobi--Bellman equation
\begin{align}\label{HJB v_2}
v_{t}(t,x,2) + \sup_{z\in\R}&\left[\mu x z v_{x}(t,x,2)+\frac{1}{2}\sigma^{2} x^{2} z^{2} v_{xx}(t,x,2)\right]=0,\quad v(T,x,2)= \frac{x^{p_2}}{p_2}. 
\end{align}
As the maximizer of the supremum above is $z^*=-\frac{\mu v_x(t,x,2)}{\sigma^2 x v_{xx}(t,x,2)}$ (assuming concavity of $x\mapsto v(t,x,2)$), the above equation becomes 
\begin{equation}\label{HJB112}
v_{t}(t,x,{{2}})-\frac{\mu^{2}}{2\sigma^{2}}\frac{v_{x}^{2}(t,x,{{2}})}{v_{xx}(t,x,{{2}})}=0,\quad v(T,x,2)= \frac{x^{p_2}}{p_2},
\end{equation}
This admits a unique classical solution 
\begin{equation}\label{f}
v(t,x,{{2}})=f(t)\frac{x^{p_{2}}}{p_2}\quad \hbox{with}\quad f(t):=\exp\left(-\frac{\mu^{2}}{2\sigma^{2}}(T-t)\right)>0, 
\end{equation}
which indeed fulfills the concavity assumption. Hence, we have $z^*=-\frac{\mu v_x(t,x,2)}{\sigma^2 x v_{xx}(t,x,2)} = \frac{\mu}{(1-p_2)\sigma^2}$ and the standard verification argument implies that
\begin{equation}\label{zeta^*_2}
\zeta^{2,*}_u \equiv \frac{\mu}{(1-p_2)\sigma^2},\quad u\in [t,T], 
\end{equation}
is an optimal portfolio process for \eqref{v_2}. 

On the other hand, suppose that $i=\epsilon_t=1$. For any $\zeta^1\in\AA(t)$, consider $\zeta\in\AA(t)$ defined by $\zeta_u := \zeta^1_u 1_{\{u < \theta\}} + \zeta^{2,*}_u 1_{\{u \ge \theta\}}$, $u\in[t,T]$, with $\zeta^{2,*}$ as in \eqref{zeta^*_2}. A heuristic application of It\^{o}'s formula for Markov-modulated processes (see e.g., \cite[Theorem 2.3]{EK22}) to $v(u,X^\zeta_u,\epsilon_u)$ yields
\begin{align}
&\E^{t,x,1}[v(T, X^\zeta_T,\epsilon_T)] -  v(t,x,1)\notag\\
&= \E^{t,x,1}\bigg[\int_t^{T\wedge\theta} v_t(s,X^\zeta_s,1) + \mu\zeta_sX^\zeta_s v_x(s,X^\zeta_s,1) +\frac12 \sigma^2 (X^\zeta_s)^2 v_{xx}(s,X^\zeta_s,1)\zeta^2_s\, ds\bigg]\nonumber\\
&\hspace{0.3in}  +\E^{t,x,1}\left[\left(v(\theta,X^\zeta_\theta,{2}) - v(\theta,X^\zeta_\theta,1)\right) 1_{\{\theta\le T\}}\right]\notag\\ 
&\hspace{0.3in} +\E^{t,x,1}\bigg[\int_{T\wedge\theta}^{T} v_t(s,X^\zeta_s,{2}) + \mu\zeta_s X^\zeta_s v_x(s,X^\zeta_s,{2}) +\frac12 \sigma^2 (X^{\zeta}_s)^2 v_{xx}(s,X^\zeta_s, 2)\zeta^2_s\, ds\bigg]\notag\\
&= \E^{t,x,1}\bigg[\int_t^{T\wedge\theta} v_t(s,X^{\zeta^1}_s,1) + \mu\zeta^1_sX^{\zeta^1}_s v_x(s,X^{\zeta^1}_s,1) +\frac12 \sigma^2 (X^{\zeta^1}_s)^2 v_{xx}(s,X^{\zeta^1}_s,1)(\zeta^1_s)^2\, ds\bigg]\nonumber\\
&\hspace{0.3in}  +\E^{t,x,1}\left[\left(v(\theta,X^{\zeta^1}_\theta,{2}) - v(\theta,X^{\zeta^1}_\theta,1)\right) 1_{\{\theta\le T\}}\right]\notag\\
&= \E^{t,x,1}\bigg[\int_t^{T} e^{-\lambda s} \bigg\{ v_t(s,X^{\zeta^1}_s,1) + \mu\zeta^1_s X^{\zeta^1}_s v_x(s,X^{\zeta^1}_s,1) +\frac12 \sigma^2 (X^{\zeta^1}_s)^2 v_{xx}(s,X^{\zeta^1}_s,1)(\zeta^1_s)^2 \notag\\
&\hspace{3.5in}+ \lambda \left(v(s,X^{\zeta^1}_s,2) - v(s,X^{\zeta^1}_s,1)\right) \bigg\}\, ds\bigg]
\label{Ito's'}
\end{align}
where the second equality follows from $\zeta_s = \zeta^{2,*}_s = \frac{\mu}{(1-p_2)\sigma^2}$ for $s\ge \theta$ as well as the HJB equation \eqref{HJB v_2} (where the supremum is achieved by 
$\frac{\mu}{(1-p_2)\sigma^2}$), and the third equality follows from the distribution of $\theta$ in \eqref{theta}. As $v(T, X^\zeta_T,\epsilon_T) = U(X^\zeta_T,\epsilon_T)$ by \eqref{v}, taking supremum over $\zeta$ (or equivalently $\zeta^1$) on the left-hand side of \eqref{Ito's'} gives $v(t,x,1)-v(t,x,1)=0$. This, along with \eqref{Ito's'}, suggests that the HJB equation for $v(t,x,1)$ be
\begin{align}\label{HJB v_1}
v_{t}(t,x,1) + \sup_{z\in \R}&\left[\mu z xv_{x}(t,x,1)+\frac{1}{2}\sigma^{2}z^{2}x^{2}v_{xx}(t,x,1)\right]-\lambda\big(v(t,x,1)-v(t,x,2)\big)=0,\quad 
\end{align}
with the terminal condition
\begin{equation}\label{TC v_1}
v(T,x,1)= \frac{x^{p_1}}{p_1}. 
\end{equation}
As the maximizer of the supremum in \eqref{HJB v_1} is $z^*=-\frac{\mu v_x(t,x,1)}{\sigma^2 x v_{xx}(t,x,1)}$ (assuming concavity of $x\mapsto v(t,x,1)$), \eqref{HJB v_1}-\eqref{TC v_1} reduce to 
\begin{equation}\label{HJB1}
v_{t}(t,x,1)-\frac{\mu^{2}}{2\sigma^{2}}\frac{v_{x}^{2}(t,x,1)}{v_{xx}(t,x,1)}-\lambda\big(v(t,x,1)-v(t,x,2)\big)=0,\quad v(T,x,1)=  \frac{x^{p_1}}{p_1}.
\end{equation}
As this equation depends on $v(t,x,2)$ given by \eqref{f}, it cannot be explicitly solved.

\subsection{The Dual Value Function}
Consider the Fenchel--Legendre transform (i.e., the convex dual) of $v(t,x,1)$, defined by
\begin{equation}\label{kj}
w(t,y) := \sup_{x>0}\{v(t,x,1)-xy\}\quad \forall (t,y)\in [0,T]\times (0,\infty).
\end{equation}
Heuristically, if we assume that $x\mapsto v(t,x,1)$ is concave and the first-order condition $v_x(t,x,1)-y=0$ has a solution $x^*(y)$ for all $y>0$, we will have $w(t,y) = v(t,x^*(y),1)-x^*(y)y$. Direct calculations then imply 
\[
w_t(t,y) = v_t(t,x^*(y),1),\quad w_y(t,y) = -x^*(y),\quad w_{yy}(t,y) = -(x^*)'(y) = - \frac{1}{v_{xx}(t,x^*(y),1)},
\]
where the last equality follows from differentiating $v_x(t,x^*(y),1)-y=0$ w.r.t.\ $y$ on both sides. It follows that the HJB equation \eqref{HJB1} for $v(t,x,1)$ turns into 
\begin{equation}\label{HJB3}
w_{t}(t,y)-\lambda w(t,y)+\lambda y w_{y}(t,y)+\xi y^2 w_{yy}(t,y)+\lambda f(t)\frac{(- w_{y}(t,y))^{p_2}}{p_2}=0,\quad w(T,y)=-\frac{y^{q_{1}}}{q_{1}},
\end{equation}
where we take 
\begin{equation}\label{q_1}
\xi := \frac{\mu^{2}}{2\sigma^{2}}>0,\qquad q_1<0\ \ \hbox{such that}\ \ \frac{1}{p_{1}}+\frac{1}{q_{1}}=1\ \hbox{(recall \eqref{p_1,p_2})},
\end{equation}
and recall $f(t)= \exp(-\xi (T-t))$ from \eqref{f}. It is worth noting that \eqref{HJB3} involves the nonlinear term $\lambda f(t)\frac{(- w_{y}(t,y))^{p_2}}{p_2}$ precisely because \eqref{HJB1} depends on $v(t,x,2)$ given by \eqref{f}.

\section{The Dual Problem}\label{sec:dual}
The goal of this section is to prove the existence of a classical solution $w$ to \eqref{HJB3}, show that it admits suitable properties (nonnegativity, growth conditions, and convexity), and establish a stochastic control characterization of it. 

To begin with, we observe that the challenge of finding a solution to \eqref{HJB3} stems from not only the involved nonlinearity (i.e., the term $\lambda f(t)\frac{(- w_{y}(t,y))^{p_2}}{p_2}$), but a {\it mismatch} between this nonlinear term and the terminal condition. Specifically, if we take 
\begin{equation}\label{q_2}
q_2<0\ \ \hbox{such that}\ \ \frac{1}{p_{2}}+\frac{1}{q_{2}}=1\ \hbox{(recall \eqref{p_1,p_2})}
\end{equation}
and consider the ansatz $w(t,y) = -u(t)^{q_2-1}\frac{y^{q_2}}{q_2}$, then the PDE in \eqref{HJB3} reduces to 
\[
u'(t) + (\lambda+\xi q_2) u(t) = \lambda f(t) u^2(t),
\]
which is a Bernouli equation that can be explicitly solved; see a related reduction to a Bernouli equation in \cite{GH19}. However, this closed-form solution $w(t,y) = -u(t)^{q_2-1}\frac{y^{q_2}}{q_2}$ by construction violates the terminal condition in \eqref{HJB3}. In other words, if we intend to simplify the nonlinearity by taking advantage of the specific PDE structure (in the $y$ variable particularly), we will end up violating the terminal condition. To maintain the terminal condition, there is then no easy way to simplify the PDE.

In view of this, we ultimately construct a solution $w$ to \eqref{HJB3} from scratch as follows. First, we consider a regularized version of \eqref{HJB3} (under appropriate change of variables) and express a solution to the regularized equation as a fixed point of an integral operator, defined on the space of continuous functions from $[0,T]$ to a weighted Sobolev space. By showing that this operator is a contraction mapping when the time horizon $T>0$ is small enough, we obtain local well-posedness of the regularized equation. A careful estimate then asserts that there is no blow-up in finite time, so that the local solution extends to any fixed $T>0$. Finally, by the Aubin--Lions compactness theorem, we are able to show that the global solution to the regularized equation converges to a solution to \eqref{HJB3}, as regularization diminishes. All the above construction is detailed in Appendix~\ref{sec:construct w}, which brings about the following result.

\begin{theorem}\label{th.for.w}
There is a solution $w\in C^{1+\gamma/2,2+\gamma}([0,T]\times (0,\infty))$ to \eqref{HJB3}, for some $\gamma\in(0,1)$. Moreover, $w$ satisfies the following properties: 
\begin{itemize}
\item [(i)] $w$ is strictly positive; specifically, 
\begin{equation}\label{inf tw}
w(t,y) > -\exp\big\{ (q_1-1)(\lambda + \xi q_1)(T-t)\big\}\frac{y^{q_{1}}}{q_{1}}>0. 
\end{equation}
\item [(ii)] $w$ is strictly decreasing in $y$.
\item [(ii)] There exist $\eta>0$ and $C>0$ (depending on $T>0$) such that
\begin{equation}\label{todo}
 w(t,y)\le C(1+y^{-\eta})\quad \forall (t,y)\in [0,T]\times (0,\infty). 
 \end{equation}
 \end{itemize}
\end{theorem}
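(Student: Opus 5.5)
The plan follows the route announced before the statement: change variables to get a semilinear equation on $\R$, regularize, solve by a contraction, pass to the limit by compactness, and obtain (i)--(iii) from comparison.

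\textbf{Change of variables.} I set $y=e^{z}$, reverse time with $\tau=T-t$, and factor out the terminal profile. Writing $w(t,y)=-\frac{y^{q_1}}{q_1}\,u(\tau,z)$ would be one option. A better one is to work with $W(\tau,z)=w(T-\tau,e^z)$ directly. Then $\xi y^2w_{yy}+\lambda yw_y=\xi W_{zz}+(\lambda-\xi)W_z$, so the equation becomes
\[
W_\tau=\xi W_{zz}+(\lambda-\xi)W_z-\lambda W+\lambda f\,\frac{(-e^{-z}W_z)^{p_2}}{p_2},\qquad W(0,z)=-\frac{e^{q_1 z}}{q_1}.
\]
This is a uniformly parabolic equation on $\R$ with constant coefficients. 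The nonlinearity $g(z,W_z)=\lambda f\,(-e^{-z}W_z)^{p_2}/p_2$ is only Hölder at $W_z=0$ and has exponential weights. I therefore regularize it by replacing $s\mapsto s_+^{p_2}$ with a globally Lipschitz, smooth, increasing $\phi_\eps(s)$, for instance $\phi_\eps(s)=(s_++\eps)^{p_2}-\eps^{p_2}$ truncated at a large level.

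\textbf{Local and global existence of the regularized solution.} I write the solution in mild form with the heat semigroup $S(\tau)$ of $\xi\partial_{zz}+(\lambda-\xi)\partial_z-\lambda$:
\[
W=S(\tau)W_0+\int_0^\tau S(\tau-s)\,g_\eps(z,W_z)\,ds .
\]
I work in $C([0,T_0];W^{1,\infty}_a(\R))$, with weight $e^{-a z}$ chosen to absorb the growth $e^{q_1 z}$ as $z\to-\infty$. The gradient estimate $\|\partial_zS(\tau)\|\lesssim\tau^{-1/2}$ makes the map a contraction for small $T_0$. To globalize, I need an a priori bound on $W$ and $W_z$ that does not blow up. This comes from comparison. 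The nonlinearity is nonnegative, so $W\ge S(\tau)W_0$, and the latter is exactly the subsolution $-e^{(q_1-1)(\lambda+\xi q_1)(T-t)}y^{q_1}/q_1$ appearing in \eqref{inf tw}. For the upper side I use a supersolution of Bernoulli type of the form $C(\tau)(1+e^{-\eta z})$. Sublinear growth of $\phi_\eps$ in $W_z$ (power $p_2<1$) keeps $C(\tau)$ finite on $[0,T]$. Monotonicity comes from differentiating in $z$: $V=W_z$ solves a linear equation with bounded coefficients, and $V(0)<0$. The maximum principle then gives $V\le0$, and strict inequality follows from the strong maximum principle. These bounds are uniform in $\eps$.

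\textbf{Passing to the limit.} With bounds uniform in $\eps$ on $W^\eps$ and $\partial_zW^\eps$ in weighted spaces, and on $\partial_\tau W^\eps$ in a weaker space (read off the equation), Aubin--Lions gives local uniform convergence of $W^\eps$ and $\partial_z W^\eps$ along a subsequence. The limit solves the equation in the distributional or mild sense. Interior Schauder theory, with the Hölder nonlinearity applied on compact sets where $W_z<0$ strictly, upgrades this to $C^{1+\gamma/2,2+\gamma}$. The strict lower bound, the decrease in $y$, and the bound \eqref{todo} pass to the limit. Strictness in (i) and (ii) is recovered from the strong maximum principle applied to the limit, using that the nonlinearity is positive where $W_z<0$.

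\textbf{Main obstacle.} I expect the hardest step to be the $\eps$-uniform a priori control of $W_z$ near $z\to\pm\infty$, where the weight $e^{-z p_2}$ in the nonlinearity interacts with the growth of the data. It is also delicate to keep $W_z$ strictly negative on compacts, so that the non-Lipschitz point $s=0$ of $s^{p_2}$ is avoided in the limit. My first attempt would be to build explicit barriers for $W_z$ of the form $-c_1e^{q_1z}\le W_z\le -c_2e^{q_1z}$, adjusted by time-dependent factors, using the linear equation satisfied by $W_z$.
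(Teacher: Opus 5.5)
Your overall architecture matches the paper's: logarithmic change of variables, regularization, Duhamel/contraction in $W^{1,\infty}_a$, Aubin--Lions, then Schauder. Two of your ingredients are sound.
\begin{itemize}
\item $W\ge S(\tau)W_0$ is exactly the paper's subsolution $u_*$, because $\xi q_1^2+(\lambda-\xi)q_1-\lambda=(q_1-1)(\lambda+\xi q_1)$.
\item The supersolution $C(\tau)(1+e^{-\eta z})$ is a valid, somewhat cleaner alternative to the paper's derivation of \eqref{todo}, which instead reads the bound off the weighted norm for $y\le 1$ and uses monotonicity for $y\ge1$. You need $\eta\ge|q_1|$ for the data and $\eta\ge|q_2|=p_2/(1-p_2)$, so that the source $\propto e^{-p_2(1+\eta)z}$ is dominated by $e^{-\eta z}$ as $z\to-\infty$. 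The second condition is automatic since $|q_1|>|q_2|$.
\end{itemize}

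The genuine gap is the step you yourself flag: nothing in the proposal gives an $\eps$-uniform bound on $W^\eps_z$. Comparison controls $W^\eps$ and the sign of $W^\eps_z$, not its size, so your claim that the $W_z$ bound ``comes from comparison'' is unfounded. Without it you have no uniform local bound on $W^\eps_{zz}$, no compactness of $\partial_zW^\eps$, and no way to pass to the limit in $(-e^{-z}W_z)^{p_2}$.

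The barrier you propose cannot work. Suppose $-c_1e^{q_1z}\le W_z\le -c_2e^{q_1z}$ held for $\tau>0$. Then the source would be $\gtrsim e^{p_2(q_1-1)z}$, forcing $W\gtrsim e^{p_2(q_1-1)z}$ as $z\to+\infty$. On the other hand, integrating the lower barrier from $z$ to $\infty$ gives $W\lesssim e^{q_1z}$. These are incompatible because $p_2(q_1-1)=-p_2/(1-p_1)>q_1$. Iterating shows the decay rate at $+\infty$ is pushed toward $e^{q_2z}$, with $q_1<q_2<0$.

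The missing idea is the paper's weighted Duhamel estimate, which needs no barrier for $W_z$.
\begin{itemize}
\item Use the weight $e^{-a|z|}$ with $a\ge|q_1|$. Your $e^{-az}$ grows as $z\to-\infty$ and does not absorb $e^{q_1z}$.
\item Then $e^{-a|z|}e^{-p_2z}|W_z|^{p_2}=e^{-a(1-p_2)|z|-p_2z}\big(e^{-a|z|}|W_z|\big)^{p_2}$, and $\sup_z e^{-a(1-p_2)|z|-p_2z}<\infty$ exactly when $a\ge|q_2|$.
\item Combined with $\|\partial_zS(t)\|\lesssim 1+t^{-1/2}$ and $p_2<1$, this gives $\|W(\tau)\|_{W^{1,\infty}_a}\le C+C\int_0^\tau\big(1+(\tau-s)^{-1/2}\big)\big(1+\|W(s)\|^{p_2}_{W^{1,\infty}_a}\big)\,ds$. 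That is a bound uniform in the regularization.
\item A cutoff energy estimate, testing with $(\chi^2W_z)_z$, then bounds $W_{zz}$ in $L^2_{loc}$, and Aubin--Lions applies to both $W$ and $W_z$.
\end{itemize}

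A second, fixable defect: your regularization leaves $e^{-z}$ inside $\phi_\eps$. So $W_z\mapsto\phi_\eps(-e^{-z}W_z)$ has Lipschitz constant $e^{-z}\|\phi_\eps'\|_\infty$, which is unbounded as $z\to-\infty$. Since the weight acts identically on both sides of the difference, the contraction estimate in $L^\infty_a$ fails. Your comparison arguments, both for the supersolution and for $V=W_z$, also lose their bounded-coefficient hypothesis. The paper avoids this by writing the nonlinearity as $g_M(z)F_\eps(u_z)$, with $g_M$ a bounded decreasing cutoff of $e^{-p_2z}$; you need an analogous cap on $e^{-z}$.
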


\begin{proof}
See Section~\ref{subsec:proof of Thm tw}.
\end{proof}

\subsection{A Stochastic Control Characterization of $w$}
Recall $f$ in \eqref{f}. Let $\mathcal{C}$ be the set of $\R_+$-valued progressively measurable processes $c$ satisfying $\int_0^T f(s)c_s\, ds <\infty$ $\P$-a.s. For any $t\in[0,T]$, $y>0$, and $c\in\mathcal C$, let $Y^c$ be the unique solution to 
\begin{equation}\label{Y}
dY_s = \lambda\big(1 - f(s) c_s\big) Y_s\,  ds- \sqrt{2\xi} Y_s\, dW_s,\quad s\in[t,T],\quad Y_t=y. 
\end{equation}
We can then define the function $J^c:[0,T]\times(0,\infty)\to\R$ by
\begin{equation}\label{J^c}
J^c (t,y) := \E^{t,y}\left[\int_t^T  -e^{-\lambda(s-t)} f(s)\frac{(c_sY^c_s)^{q_2}}{q_2} - e^{-\lambda(T-t)} \frac{(Y^{c}_T)^{q_1}}{q_1}\right],
\end{equation}
where $\E^{t,y}$ denotes an expectation conditioned on $Y^c_t =y$ and $q_1,q_2<0$ are as in \eqref{q_1} and \eqref{q_2}. 

\begin{remark}\label{rem:J^c}
For any $c\in\mathcal C$, the unique solution to \eqref{Y} is given by 
\begin{equation}\label{exp form}
Y^c_s = y \exp\bigg\{ (\lambda-\xi) (t-s)- \lambda\int_t^s f(u) c_u\, du - \sqrt{2 \xi} (W_t-W_s)\bigg\}>0,\quad \forall s\ge t. 
\end{equation}
With $f(s)>0$, $c_s\ge 0$, and $Y_s>0$ for all $s\in [t,T]$, as well as $q_1,q_2<0$, we observe from \eqref{J^c} that $J^c(t,y) \ge 0$. Moreover, there exists some $c\in\mathcal C$ such that $J^c<\infty$. Indeed, for any $\ell>0$, by taking $c_s = \ell \exp\big(\xi(T-s)\big)$, we see that $f(s) c_s = \ell$ (so that $Y$ becomes a geometric Brownian motion) and $c$ is a positive process bounded away from 0. It can then be checked directly from \eqref{J^c} that $J^c (t,y)<\infty$ for all $(t,y)\in[0,T]\times(0,\infty)$. 
\end{remark}

In the following, we will show that the classical solution $w$ to \eqref{HJB3} obtained in Theorem~\ref{th.for.w} in fact equals $\inf_{c\in\mathcal C} J^c(t,y)$. To this end, we need the following two technical results. 

\begin{lemma}\label{lem:c<=c'}
For any $t\in[0,T)$ and $c,c'\in\mathcal C$ such that $c_s\le c'_s$ a.s.\ for all $s\in[t,T)$, we have $Y^c_s \ge Y^{c'}_s$ and $c_s Y^c_s \le c'_s Y^{c'}_s$ a.s.\ for all $s\in[t,T]$. 
\end{lemma}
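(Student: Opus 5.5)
The plan is to work directly with the explicit representation \eqref{exp form} of $Y^c$ given in Remark~\ref{rem:J^c}. The point is that the same Brownian path $W$ drives both $Y^c$ and $Y^{c'}$, and both start from $Y_t=y$. Therefore all stochastic terms cancel in the ratio, and the comparison becomes a pathwise deterministic statement. For every $s\in[t,T]$, \eqref{exp form} gives
\[
\frac{Y^{c'}_s}{Y^c_s}=\exp\Big\{-\lambda\int_t^s f(u)\big(c'_u-c_u\big)\,du\Big\}\quad\text{a.s.}
\]
Since $f>0$, $\lambda>0$ and $c'_u-c_u\ge 0$ for a.e.\ $u\in[t,s)$, the exponent is nonpositive. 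Hence $Y^{c'}_s\le Y^c_s$ a.s. The claim at $s=T$ follows by continuity of both processes, because the integral only involves $u<T$. This proves the first assertion.

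For the second assertion, I would write, on the event $\{c_s>0\}$,
\[
\frac{c'_sY^{c'}_s}{c_sY^c_s}=\frac{c'_s}{c_s}\exp\Big\{-\lambda\int_t^s f(u)\big(c'_u-c_u\big)\,du\Big\}.
\]
On $\{c_s=0\}$ the inequality $c_sY^c_s=0\le c'_sY^{c'}_s$ is immediate, since $Y^{c'}>0$. So the task reduces to showing that the ratio $c'_s/c_s\ge 1$ dominates the exponential damping factor, which lies in $(0,1]$. I would handle the two competing factors through the map
\[
\phi(r):=r\,c_s\exp\Big\{-\lambda\int_t^s f(u)\big(c_u+r(c'_u-c_u)\big)\,du\Big\}\cdot\frac{Y^c_s}{c_s}\,e^{\lambda\int_t^s f c}, \qquad r\in[0,1].
\]
This map interpolates between the two controls along $c^r:=c+r(c'-c)\in\mathcal C$. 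The plan is to compute $\frac{d}{dr}\log\big(c^r_sY^{c^r}_s\big)=\frac{c'_s-c_s}{c^r_s}-\lambda\int_t^s f(u)(c'_u-c_u)\,du$ and show that it is nonnegative. Integrating over $r\in[0,1]$ would then give the claim.

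The main obstacle is exactly this sign: the pointwise gain $(c'_s-c_s)/c^r_s$ at time $s$ must be compared with the accumulated loss $\lambda\int_t^s f(c'-c)$ over $[t,s)$. Nothing in the hypothesis $c\le c'$ alone links the value at time $s$ to the past integral. I would therefore first try to exploit the specific controls actually used later, namely feedback controls with $c'_u-c_u$ comparable to $c'_s-c_s$, to bound $\lambda\int_t^s f(c'-c)$ by $(c'_s-c_s)/c'_s$. I expect that some such structural restriction is genuinely needed for the second inequality to hold at every $s$. If the general argument cannot be closed, the honest fallback is to record precisely which additional condition makes the derivative nonnegative. Verifying that this condition holds in the later application to $\inf_{c\in\mathcal C}J^c$ is the step I expect to require the most care.
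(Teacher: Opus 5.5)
Your proof of the first assertion is correct and is essentially the paper's argument. Both use the explicit form \eqref{exp form}, where the controls enter only through $-\lambda\int_t^s f c$, so that $Y^{c'}_s/Y^c_s=\exp\{-\lambda\int_t^s f(u)(c'_u-c_u)\,du\}\le 1$.

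\textbf{The second assertion is false, so no proof can be completed.} Your proposal leaves $c_sY^c_s\le c'_sY^{c'}_s$ unproved, and the obstruction you isolated is real. Take deterministic controls $c\equiv 1$ and $c'_u=K>1$ for $u\in[t,s_0)$, $c'_u=1$ for $u\in[s_0,T]$, with $t<s_0<T$. Both lie in $\mathcal C$ and $c\le c'$. For every $s\in[s_0,T]$,
\[
\frac{c'_sY^{c'}_s}{c_sY^c_s}=\exp\Big\{-\lambda(K-1)\int_t^{s_0}f(u)\,du\Big\}<1,
\]
so $c'_sY^{c'}_s<c_sY^c_s$ surely. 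This is exactly your log-derivative $\frac{c'_s-c_s}{c^r_s}-\lambda\int_t^s f(c'-c)$ being negative: the pointwise gain vanishes at $s\ge s_0$, while the accumulated loss does not. (Your displayed $\phi$ should have $c^r_s$ in place of $r\,c_s$; as written $\phi(0)=0$. The derivative you then compute is nonetheless correct.)

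\textbf{Where the paper's proof fails.} The paper's argument breaks at its last step. It correctly derives $\int_t^s g(u)\,du\le 0$ for all $s$, where
\[
g(u):=f(u)\big\{c_ue^{-\lambda\int_t^u fc}-c'_ue^{-\lambda\int_t^u fc'}\big\}.
\]
It then concludes $g\le 0$ a.e., which does not follow. In the example above, $g<0$ on $[t,s_0)$ and $g>0$ on $(s_0,T]$. So the right way to finish is to exhibit this counterexample and drop or restrict the second assertion, not to keep searching for a proof.

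\textbf{On the application you flagged.} The same example, read as truncations of the control equal to $K$ on $[t,s_0)$ and $1$ afterwards at levels $1$ and $K$, shows that $M\mapsto c^M_sY^{c^M}_s$ with $c^M=c\wedge M$ is not monotone. So no structural restriction satisfied by truncations will rescue the lemma in Step 2 of Theorem~\ref{thm:convexity of tw}, and the bound \eqref{upper bound} fails as justified there.

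What Step 2 actually needs is weaker:
\begin{itemize}
\item Pointwise convergence $c^M_sY^{c^M}_s\to c_sY^c_s$. This holds because $c^M_s\to c_s$ and $Y^{c^M}_s\downarrow Y^c_s$.
\item A domination for $M\ge1$. Using only the true first assertion ($Y^{c^M}\ge Y^c$), split on $\{c_s\le M\}$ and $\{c_s>M\}$ to get $(c^M_sY^{c^M}_s)^{q_2}\le (c_sY^c_s)^{q_2}+(Y^c_s)^{q_2}$.
\item Integrability of $\sup_s\E[(Y^c_s)^{q_2}]$. This follows from $Y^c_s\ge Y^c_T\exp\{-(\lambda-\xi)(T-s)+\sqrt{2\xi}(W_T-W_s)\}$, from $\E[(Y^c_T)^{q_1}]<\infty$ (implied by $J^c<\infty$), from $q_1<q_2<0$, and from H\"older's inequality.
\end{itemize}
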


\begin{proof}
As $c_s\le c'_s$ a.s.\ for all $s\in[t,T)$, 
\begin{equation}\label{exp>=exp}
\exp\bigg(-\lambda\int_t^u f(u) c_u\, du\bigg) \ge  \exp\bigg(-\lambda\int_t^u f(u) c'_u\, du\bigg),\quad \forall u\in [t,T]. 
\end{equation}
In view of \eqref{exp form}, this readily implies $Y^c_s \ge Y^{c'}_s$ a.s.\ for all $s\in[t,T]$. Now, observe that
\begin{align*}
    \exp\bigg(-\lambda\int_t^u f(u) c_u\, du\bigg) &= 1-\lambda\int_t^s f(u)c_u \exp\bigg(-\lambda\int_t^u f(\ell) c_\ell\, d\ell\bigg)\, du,\\
    \exp\bigg(-\lambda\int_t^u f(u) c'_u\, du\bigg) &= 1-\lambda\int_t^s f(u)c'_u \exp\bigg(-\lambda\int_t^u f(\ell) c'_\ell\, d\ell\bigg)\, du. 
\end{align*}
This, along with \eqref{exp>=exp}, entails
\[
\int_t^s f(u)\bigg\{c_u \exp\bigg(-\lambda\int_t^u f(\ell) c_\ell\, d\ell\bigg) - c'_u \exp\bigg(-\lambda\int_t^u f(\ell) c'_\ell\, d\ell\bigg)\bigg\}\, du\le 0,\quad\forall s\in[t,T],
\]
which in turn implies 
\[
c_u \exp\bigg(-\lambda\int_t^u f(\ell) c_\ell\, d\ell\bigg) \le c'_u \exp\bigg(-\lambda\int_t^u f(\ell) c'_\ell\, d\ell\bigg),\quad \hbox{for a.e.\ $u\in[t,T]$}. 
\]
Thanks to \eqref{exp form} again, this implies $c_s Y^c_s \le c'_sY^{c'}_s$ a.s.\ for all $s\in[t,T]$.
\end{proof}

If we take $c_s$ in \eqref{Y} to be of the feedback form $c(s,y) = \big(-w_y(s,y)\big)^{\frac{1}{q_2 -1}}/y$, where $w$ is the solution to \eqref{HJB3} obtained in Theorem~\ref{th.for.w}, then \eqref{Y} becomes
\begin{equation}\label{Y^*}
dY_s = \lambda \left(Y_s -  f(s) {\big(-w_y(s,Y_s)\big)}^{\frac{1}{q_2 -1}} \right)\,  ds-\sqrt{2\xi} Y_s\, dW_s,\quad s\in[t,T],\quad Y_t=y. 
\end{equation}

\begin{lemma}\label{lem:Y^*}
For any $(t,y)\in[0,T)\times(0,\infty)$, there exists a unique non-explosive strong solution $Y$ to \eqref{Y^*}. Moreover, $Y_s>0$ $\forall s\in[t,T]$ $\P$-a.s.
\end{lemma}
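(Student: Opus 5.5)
Set $g(s,y):=f(s)\big(-w_y(s,y)\big)^{\frac{1}{q_2-1}}$, so that \eqref{Y^*} reads $dY_s=\lambda\big(Y_s-g(s,Y_s)\big)\,ds-\sqrt{2\xi}\,Y_s\,dW_s$. The plan is to obtain a local solution from local Lipschitz coefficients on $(0,\infty)$, and then show that the solution reaches neither $\infty$ nor $0$ before $T$.

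\textbf{Regularity of the drift.} By Theorem~\ref{th.for.w}(ii), $w_y\le 0$. I would first upgrade this to $w_y<0$ on $[0,T]\times(0,\infty)$. One route is a strong maximum principle for the linear parabolic equation obtained by differentiating \eqref{HJB3} in $y$. Another is to read it off the construction in Appendix~\ref{sec:construct w}. Granting $w_y<0$, the map $z\mapsto z^{\frac{1}{q_2-1}}$ is smooth on $(0,\infty)$. Since $w\in C^{1+\gamma/2,2+\gamma}$, $w_y$ is $C^1$ in $y$, and therefore $g$ is continuous, positive, and locally Lipschitz in $y$ on $[0,T]\times(0,\infty)$, uniformly in $s$ on compacts. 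The diffusion coefficient $-\sqrt{2\xi}\,y$ is globally Lipschitz.

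\textbf{Local solution.} For $n\ge1$, let $\tau_n$ be the exit time from $[1/n,n]$. Truncating the coefficients outside $[1/n,n]$ and applying the standard Itô existence and uniqueness theorem gives a unique strong solution up to $\tau_n$. Pathwise uniqueness on each $[t,\tau_n]$ makes these solutions consistent, so they patch together into a unique strong solution up to $\tau:=\lim_n\tau_n$.

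\textbf{No explosion to $\infty$.} Because $g\ge0$, the drift satisfies $\lambda(y-g)\le\lambda y$. A comparison theorem then bounds $Y$ above by the geometric Brownian motion $\bar Y_s=y\exp\{(\lambda-\xi)(s-t)-\sqrt{2\xi}(W_s-W_t)\}$. Alternatively, apply Itô's formula to $\log Y$ and drop the nonpositive term $-\lambda g/Y$. Either way $Y$ cannot reach $\infty$ on $[t,T]$.

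\textbf{Strict positivity (main obstacle).} Near $y=0$ the drift $\lambda(y-g(s,y))$ may be strongly negative, and it could push $Y$ to $0$ in finite time. This is what must be controlled. Writing $Y=\bar Y\cdot\exp\{-\lambda\int_t^s g(u,Y_u)/Y_u\,du\}$, as in \eqref{exp form} with $c_u=g(u,Y_u)/(f(u)Y_u)$, reduces the task to bounding $g(s,y)/y$ as $y\downarrow0$. In terms of the derivative, this means showing that $-w_y(s,y)$ grows at least like $y^{q_2-1}$ near $0$. (Note $\frac{1}{q_2-1}<0$, so large $-w_y$ makes $g$ small.) I expect this to come from the lower bound \eqref{inf tw}, which gives $w\gtrsim y^{q_1}$, together with convexity-type or interior estimates from Appendix~\ref{sec:construct w}. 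Those should give $-w_y(s,y)\ge c\,y^{q_1-1}$ for small $y$. Since $q_1-1<q_2-1<0$, this bound is strong enough: it yields $g(s,y)\le C y^{\frac{q_1-1}{q_2-1}}$ with exponent $\frac{q_1-1}{q_2-1}>1$, so $g(s,y)/y$ stays bounded near $0$.

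\textbf{Conclusion.} With $g(s,y)\le Ky$ for $y\le1$, Itô's formula applied to $\log Y$ has drift bounded below. Hence $\log Y$ stays finite on $[t,T]$, so $\tau_n\to\infty$ beyond $T$ and $Y_s>0$ for all $s\in[t,T]$ a.s. If this derivative bound is not available from what is already proved, the fallback is a Feller-test argument using the scale function of the one-dimensional diffusion, with $0$ shown to be inaccessible.
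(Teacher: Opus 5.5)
\textbf{The gap is in the positivity step.} Your overall structure is sound: localize on $[1/n,n]$, rule out explosion to $+\infty$ by comparison with a geometric Brownian motion, then rule out hitting $0$. The conditional computation is also right. If $-w_y(s,y)\ge c\,y^{q_1-1}$ near $0$, then $q_1<q_2<0$ gives $\frac{q_1-1}{q_2-1}>1$, so $g(s,y)/y$ is bounded near $0$. But this derivative bound is never established, and the tools you point to cannot supply it at this stage.
\begin{itemize}
\item The bound \eqref{inf tw} controls $w$, not $w_y$. A decreasing function between $c\,y^{q_1}$ and $C(1+y^{-\eta})$ must have large slope \emph{somewhere} on each interval $(y/K,y)$, by the mean value theorem. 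It need not have large slope at every point unless $w_y$ is monotone, i.e.\ unless $w$ is convex.
\item Convexity is Corollary~\ref{coro:convexity}. It is deduced from Theorem~\ref{thm:convexity of tw}, whose Step~3 uses the process $Y^*$ built in this very lemma, so invoking it here is circular.
\item Interior Schauder estimates bound $|w_y|$ from above, not $-w_y$ from below.
\item The Appendix gives only $\partial_z u\le 0$, i.e.\ $w_y\le 0$, with no quantitative lower bound.
\item The Feller-test fallback does not help. The drift depends on $s$ through $f(s)$ and $w_y(s,\cdot)$, so the time-homogeneous scale-function test does not apply directly. Reducing to it by comparison would need an upper bound on $g(s,y)$ near $0$, which is the same missing estimate.
\end{itemize}

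\textbf{How the paper avoids this.} The paper needs no information on $w_y$ near $0$, because it uses $w$ itself as a Lyapunov function. With the feedback drift $b=\lambda\big(y-f(-w_y)^{\frac{1}{q_2-1}}\big)$ one has $b\,w_y=\lambda y w_y+\lambda f(-w_y)^{p_2}$, since $\frac{1}{q_2-1}+1=p_2$. By \eqref{HJB3}, the process $e^{-\lambda(s-t)}w(s,Y_s)$ then has drift $\lambda f(s)(-w_y)^{p_2}/q_2\le 0$. Being nonnegative, it is a supermartingale up to the hitting time $\tau_n$ of $1/n$. Hence
$\P(\tau_n\le T)\le e^{\lambda(T-t)}w(t,y)/\inf_{s\in[0,T]}w(s,1/n)$,
and the right-hand side tends to $0$ by \eqref{inf tw}. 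This uses only the function-level bound you already have.

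\textbf{If you want to keep your route.} You must prove the derivative bound separately. One way is a maximum-principle comparison in \eqref{sys.w.reg} between $\partial_z u_{\eps,M}$ and $-e^{\xi(\alpha+q_1)^2 t}e^{q_1 z}$, followed by passage to the limit. The latter function is a supersolution of the linearized equation with the same initial data. This holds because the transport coefficient $h g_M F_\eps'(\partial_z u_{\eps,M})$ is nonpositive by \eqref{monotone}, and $h g_M' F_\eps\le 0$. This would also give the strict inequality $w_y<0$, which both you and the paper need for the local Lipschitz property and which neither proves. As written, however, your key step is an expectation rather than an argument.
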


\begin{proof}
Define $b(s,y) := \lambda\big( y- f(s) {\big(-w_y(s,y)\big)}^{\frac{1}{q_2 -1}}\big)$ for $(s,y)\in [0,T]\times (0,\infty)$, such that \eqref{Y^*} can be written as 
\begin{equation}\label{Y^*'}
dY_s = b(s,Y_s)\, ds - \sqrt{2\xi} Y_s\, dW_s,\quad Y_t=y. 
\end{equation}
For each $n,m\in\N$, set $E_{n,m} := [1/n,m]$. As $w\in C^{1,2}([0,T]\times (0,\infty))$ and $w_y<0$ on $[0,T]\times (0,\infty)$, $y\mapsto b(s,y) := \lambda\big(y- f(s) {\big(-\tw_y(s,y)\big)}^{\frac{1}{q_2 -1}}\big)$ is Lipschitz and has linear growth on $E_{n,m}$, uniformly in $s\in[0,T]$. Hence, for any $y\in E_{n,m}$, there is a unique strong solution $Y^{(n,m)}$ to \eqref{Y^*'} up to a possibly finite explosion time $\tau_n\wedge \theta_m$, with
\[
\tau_n:= \inf\{s\in [t,T]: Y^{(n,m)}_s\le 1/n\}\quad \hbox{and}\quad \theta_m:=\inf\{s\in [t,T]: Y^{(n,m)}_s\ge m\}
\]
under the convention $\inf\emptyset = \infty$. It follows that for each fixed $n\in\N$,  
\[
Y^{(n)}_s := \sum_{m\in\N} Y^{(n,m)}_{s\wedge \tau_n} 1_{\{\theta_{m-1} \le s< \theta_m\}},\quad \hbox{with $\theta_0:=t$}, 
\]
is the unique strong solution to \eqref{Y^*'} up to a possibly finite explosion time $\tilde{\tau}_n\wedge \tilde{\theta}_\infty$, with
\[
 \tilde\tau_n:= \inf\{s\in [t,T]: Y^{(n)}_s\le 1/n\}\quad \hbox{and}\quad \tilde\theta_\infty := \lim_{m\to\infty}\inf\{s\in [t,T]: Y^{(n)}_s\ge m\}.
 \] 
under the convention $\inf\emptyset = \infty$. 
 
Set $\bar b(y):= \lambda y$ and consider the geometric Brownian motion $Y$ given by  
\begin{equation}\label{Y w/o c}
dY_s = \bar b(Y_s)\,  ds- \sqrt{2\xi} Y_s\, dW_s,\quad Y_t=y.
\end{equation}
As $\bar b(y) \ge b(s,y)$ on $[0,T]\times (0,\infty)$ by definition, by the comparison arguments in \cite[Proposition 5.2.18]{KS-book-91}, we get $Y_s\ge Y^{(n)}_s$ for all $s\in [t, \tilde\tau_n\wedge\tilde\theta_\infty]$. It follows that
\[
\tilde\tau_n\wedge \tilde \theta_\infty \ge \tilde\tau_n\wedge \lim_{m\to\infty}\inf\{s\in [t,T]: Y_s\ge m\} = \tilde\tau_n \wedge \infty = \tilde\tau_n,
\]
where the first equality follows from the fact $Y$, as a geometric Brownian motion, is non-explosive on the state space $(0,\infty)$. The above inequality indicates that if $Y^{(n)}$ explodes in finite time, it can only explode to the lower bound $1/n$, but not $+\infty$. Then, the process
\[
Y^*_s := \sum_{m\in\N} Y^{(n)}_{s} 1_{[\tilde\tau_{n-1}, \tilde\tau_n)}(s),\quad \hbox{with $\tilde\tau_0:=t$}, 
\]
is the unique strong solution to \eqref{Y^*} up to a possibly finite explosion time 
\[
\tau^*_\infty := \lim_{n\to\infty} \tau^*_n,\quad \hbox{with}\ \   \tau^*_n := \inf\{s\in [t,T]: Y^*_s\le 1/n\}.
\]
By applying It\^{o}'s formula to $s\mapsto e^{-\lambda(s-t)} w(s, Y^*_{s})$ up to $\tau^*_n$, we get 
\begin{align*}
&\E\left[e^{-\lambda(\tau^*_n-t)}  w(\tau^*_n, Y^*_{\tau^*_n})\right]- w(t,y)\notag\\
&=  \E\bigg[\int_t^{\tau^*_n} e^{-\lambda(s-t)} \bigg(w_t - \lambda w +  \lambda Y^*_s w_{y} + \lambda f(s) {\left(-w_y(s,Y^*_s)\right)}^{p_2}   + \xi (Y^*_s)^2 w_{yy}\bigg) (s, Y_s^*)\, ds \bigg]\\
&=  \E\bigg[\int_t^{\tau^*_n} e^{-\lambda(s-t)} \lambda f(s) \frac{{\left(-w_y(s,Y^*_s)\right)}^{p_2}}{q_2}\, ds\bigg]\le 0, \label{Ito's on w}
\end{align*}
where the second equality follows as $w$ satisfies \eqref{HJB3} and $\frac{1}{p_2}+\frac{1}{q_2}=1$, and the inequality results from $q_2<0$. Thanks to $\tau^*_n\le T$ and $w>0$, the above inequality implies 
\[
e^{\lambda(T-t)} w(t,y) \ge \E\left[  w(\tau^*_n, Y^*_{\tau^*_n})\right] \ge \P(\tau^*_n\le \ell)\cdot \inf_{s\in[0,T]}w(s,1/n),\quad \forall 0<\ell\le T,
\]
where the last inequality follows again from $w>0$. Hence, for any fixed $\ell>0$,
\[
\P(\tau^*_n\le \ell) \le \frac{e^{\lambda(T-t)} w(t,y)}{\inf_{s\in[0,T]}w(s,1/n)}\to 0,\quad\hbox{as}\ n\to\infty,
\]
where the convergence results from \eqref{inf tw}. It follows that $\P(\tau^*_\infty\le \ell) = \lim_{n\to\infty}\P(\tau^*_n\le \ell) =0$. This implies $\P(\tau^*_\infty=\infty)=1$, so that $Y^*$ is the unique non-explosive $(0,\infty)$-valued strong solution to \eqref{Y^*}. 
\end{proof}

Now, we are ready to present the stochastic control characterization of the solution $w$ to \eqref{HJB3}. 

\begin{theorem}\label{thm:convexity of tw}

The solution $w$ to \eqref{HJB3} obtained in Theorem~\ref{th.for.w} satisfies
\begin{equation}\label{bar w}
w(t,y) =  
\inf_{c\in \mathcal{C}} J^c(t,y),\quad \forall (t,x)\in[0,T]\times(0,\infty). 
\end{equation}

\end{theorem}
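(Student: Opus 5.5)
We prove the two inequalities $w\le \inf_c J^c$ and $w\ge\inf_c J^c$ by a verification argument built on It\^o's formula applied to $s\mapsto e^{-\lambda(s-t)}w(s,Y^c_s)$. For any $c\in\mathcal C$, the generator acting on $w$ along $Y^c$ gives
\[
w_t-\lambda w+\lambda(1-f c)y w_y+\xi y^2 w_{yy} = -\lambda f(s)\frac{(-w_y)^{p_2}}{p_2}-\lambda f(s)c\,y\,w_y ,
\]
where the equality uses \eqref{HJB3}. The key pointwise inequality is the Fenchel-type bound
\[
-\frac{(-w_y)^{p_2}}{p_2} + c y(-w_y) \ge \frac{(cy)^{q_2}}{q_2}\cdot(\hbox{suitable sign}),
\]
that is, $\inf_{k>0}\{-\frac{k^{q_2}}{q_2}-k\,(-w_y)\}$ together with the identity $\frac1{p_2}+\frac1{q_2}=1$. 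This gives $\lambda f\big(-\frac{(cy)^{q_2}}{q_2}\big)$ plus the drift term $\ge 0$, with equality exactly at $cy=(-w_y)^{1/(q_2-1)}$, which is the feedback in \eqref{Y^*}. Hence, for a localizing sequence of stopping times $\rho_n\uparrow T$,
\[
w(t,y)\le \E\Big[\int_t^{\rho_n}-e^{-\lambda(s-t)}f(s)\frac{(c_sY^c_s)^{q_2}}{q_2}ds+e^{-\lambda(\rho_n-t)}w(\rho_n,Y^c_{\rho_n})\Big],
\]
with equality for the optimal feedback $c^*$ of Lemma~\ref{lem:Y^*}.

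For $w\ge \inf J^c$ we use $c^*_s:=(-w_y(s,Y^*_s))^{1/(q_2-1)}/Y^*_s$. We first check $c^*\in\mathcal C$, which follows from continuity and positivity of $Y^*$ on $[t,T]$. In the equality version we let $n\to\infty$. The running term converges by monotone convergence, since the integrand is nonnegative. For the terminal term, Fatou's lemma (using $w\ge0$, continuity of $w$, and $w(T,\cdot)=-y^{q_1}/q_1$) gives $\liminf\E[e^{-\lambda(\rho_n-t)} w(\rho_n,Y^*_{\rho_n})]\ge \E[e^{-\lambda(T-t)}(-(Y^*_T)^{q_1}/q_1)]$. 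Together these yield $w(t,y)\ge J^{c^*}(t,y)\ge\inf_cJ^c$. No upper integrability is needed in this direction.

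For $w\le J^c$, we need the terminal term to converge upward to its limit, i.e.\ uniform integrability of $w(\rho_n,Y^c_{\rho_n})$. This is the main obstacle, because $w(t,y)\le C(1+y^{-\eta})$ by \eqref{todo} blows up as $y\downarrow0$, and $Y^c$ can be pushed toward $0$ by large $c$. Following the strategy announced in the introduction, we first treat bounded $c\le K$. Then \eqref{exp form} gives $Y^c_s\ge y\,e^{-(\lambda K+|\lambda-\xi|)T}\exp\{\sqrt{2\xi}(W_s-W_t)\}$, so $\sup_s (Y^c_s)^{-\eta}$ has all moments, being dominated by the exponential of the running maximum of Brownian motion. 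Dominated convergence then gives $w(t,y)\le J^c(t,y)$ for every bounded $c$. We may also assume $J^c<\infty$ wherever needed.

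Finally, we extend to general $c\in\mathcal C$ with $J^c(t,y)<\infty$ by truncating: $c^n:=c\wedge n$. By Lemma~\ref{lem:c<=c'}, $Y^{c^n}\downarrow$ in $n$, with $Y^{c^n}_s\ge Y^c_s$, and $c^nY^{c^n}\le cY^c$. Since $q_1,q_2<0$, we get $-(Y^{c^n}_T)^{q_1}/q_1\le -(Y^c_T)^{q_1}/q_1$ and $-(c^n_sY^{c^n}_s)^{q_2}/q_2\ge -(c_sY^c_s)^{q_2}/q_2$. The second inequality goes the wrong way, so plain dominance fails there; instead we use pathwise convergence $c^nY^{c^n}\to cY^c$, which follows from \eqref{exp form} via $\int f c^n\to\int fc$. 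This convergence holds together with a monotonicity in $n$: $c^nY^{c^n}$ is nondecreasing in $n$, by Lemma~\ref{lem:c<=c'} applied to $c^n\le c^{n+1}$. Hence $-(c^nY^{c^n})^{q_2}/q_2$ decreases to its limit, and monotone convergence applies once the first term $J^{c^1}$ is finite, as in Remark~\ref{rem:J^c}. The terminal term is handled by dominated convergence with dominating function $-(Y^c_T)^{q_1}/q_1$, which is integrable since $J^c<\infty$. Thus $J^{c^n}\to J^c$, so $w\le J^c$ for all $c\in\mathcal C$, completing \eqref{bar w}.
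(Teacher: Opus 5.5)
\textbf{Your route is the paper's, but the truncation step has a genuine gap.} Like the paper, you verify $w\le J^c$ for bounded $c$ (It\^o's formula on $e^{-\lambda(s-t)}w(s,Y^c_s)$, a geometric-Brownian lower bound on $Y^c$, and \eqref{todo}). You then truncate with $c\wedge n$ to reach all of $\mathcal C$, and use the feedback $c^*$ of Lemma~\ref{lem:Y^*} with Fatou for the reverse inequality. The first and last parts are fine.

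In the truncation step you use $c^nY^{c^n}\le cY^c$ and that $c^nY^{c^n}$ is nondecreasing in $n$, citing Lemma~\ref{lem:c<=c'}. That half of the lemma is false: its proof infers a pointwise inequality from $\int_t^s(\cdots)\,du\le 0$ for all $s$. By \eqref{exp form},
\[
\frac{c^n_sY^{c^n}_s}{c_sY^c_s}=\frac{c^n_s}{c_s}\exp\Big(\lambda\int_t^s f(u)\,(c_u-c^n_u)\,du\Big),
\]
which is strictly larger than $1$ at any $s$ with $0<c_s\le n$ once $c$ has exceeded $n$ on a set of positive measure before $s$. For example, $c=K>1$ on $[t,t+\delta)$ and $c=\tfrac12$ afterwards gives $c^1Y^{c^1}>c^KY^{c^K}=cY^c$ on $(t+\delta,T]$. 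So $-(c^nY^{c^n})^{q_2}/q_2$ need not be monotone, and your monotone-convergence step has nothing to stand on.

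The paper's Step~2 uses the same monotonicity for \eqref{upper bound}, so this hole is shared. Your version has a second one: Remark~\ref{rem:J^c} only treats controls with $fc\equiv\ell$. It says nothing about the running part of $J^{c^1}$ for $c^1=c\wedge 1$, which the paper bounds separately via $(c^1Y^{c^1})^{q_2}\le (cY^c)^{q_2}+\underline Y^{q_2}$.

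\textbf{How to repair it.} Use only the true half of the lemma, $Y^{c^n}\ge Y^c$. For $n\ge 1$:
\begin{itemize}
\item on $\{c_s\le n\}$, $c^n_sY^{c^n}_s=c_sY^{c^n}_s\ge c_sY^c_s$;
\item on $\{c_s>n\}$, $c^n_sY^{c^n}_s=nY^{c^n}_s\ge Y^c_s$.
\end{itemize}
Hence $(c^n_sY^{c^n}_s)^{q_2}\le (c_sY^c_s)^{q_2}+(Y^c_s)^{q_2}$. The last term is integrable on $[t,T]\times\Omega$:
\begin{itemize}
\item $p_1>p_2$ gives $q_1<q_2<0$;
\item \eqref{Y} gives $Y^c_s\ge Y^c_T\exp\{-(\lambda-\xi)(T-s)+\sqrt{2\xi}(W_T-W_s)\}$;
\item H\"older with exponent $q_1/q_2>1$ then bounds $\sup_{s\in[t,T]}\E[(Y^c_s)^{q_2}]$ by a constant times $(\E[(Y^c_T)^{q_1}])^{q_2/q_1}$, which is finite because $J^c<\infty$.
\end{itemize}
Dominated convergence, together with the pathwise convergence $c^nY^{c^n}\to cY^c$ you noted, then yields $J^{c^n}\to J^c$.

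Alternatively, you can avoid truncation. Replace $L=cY^c$ by $(1-\delta)L$. By linearity of \eqref{Y} in $(y,L)$, the new state is $(1-\delta)Y^c+\delta Y^0\ge \delta Y^0$, where $Y^0$ is the uncontrolled geometric Brownian motion, and the new control is $\le c$. That lower bound is all your bounded-$c$ argument uses, and the new cost is at most $(1-\delta)^{q_1}J^c$.

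\textbf{Minor point.} Keep the factor $\lambda$ that your Fenchel step produces in the running cost. Dropping it, as \eqref{J^c} and the paper's proof both do, is consistent with \eqref{HJB3} only when $\lambda=1$.
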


\begin{proof}

Fix $(t,y)\in[0,T)\times(0,\infty)$. The proof will be divided into three parts. 

{\bf Step 1:} Show that $w(t,y)\le \inf_{c\in \mathcal{C}_b} J^c(t,y)$, where $\mathcal C_b$ is the set of all $c\in\mathcal C$ that are bounded. Using the relation $\frac{1}{p_2}+\frac{1}{q_2}=1$, we observe that \eqref{HJB3} can be written as
\begin{equation}\label{HJB3'}
w_{t} - \lambda w + \lambda y w_{y} + \xi y^2 w_{yy} +\lambda f(t)\inf_{L\ge 0}\left\{(-w_y)L-\frac{L^{q_2}}{q_2}\right\} = 0,\qquad w(T,y)=-\frac{y^{q_{1}}}{q_{1}}.
\end{equation}
For any $c\in\mathcal{C}_b$, take $M>0$ such that $c_s\le M$ for all $s\in[0,T]$ and define $\tau_n:=\inf\{t\ge 0:Y^c_t\notin (1/n,n) \}$ for all $n\in\N$. Thanks to the smoothness of $w$, It\^{o}'s formula implies 
\begin{align}
&\E\left[e^{-\lambda(T\wedge\tau_n-t)}  w(T\wedge\tau_n, Y^c_{T\wedge\tau_n})\right]\notag\\
&= w(t,y)+ \E\bigg[\int_t^{T\wedge\tau_n} e^{-\lambda(s-t)} \bigg(w_t - \lambda w + \lambda\big(1- f(s)c_s\big) Y^c_s w_{y} + \xi (Y^c_s)^2 w_{yy}\bigg) (s, Y_s^c)\, ds \bigg]\notag\\
&\ge w(t,y)+ \E\left[\int_t^{T\wedge\tau_n} e^{-\lambda(s-t)} \lambda f(s) \frac{(c_s Y^c_s)^{q_2}}{q_2}\, ds\right],\label{Ito's on w}
\end{align}
where the inequality holds because $w$ is a solution to \eqref{HJB3}, and thus also to \eqref{HJB3'}. Let $\underline Y$ be a geometric Brownian motion given by 
\begin{equation}\label{Y_L}
d\underline Y_s = \lambda\big(1 - f^* M\big) \underline Y_s\,  dt - \sqrt{2\xi} \underline Y_s\, dW_s, 
\end{equation}
with $f^*:=\max_{t\in[0,T]} f(t)<\infty$. By the standard comparison theorem for one-dimensional SDEs, $Y^c_{s}\ge \underline Y_s$ for all $s\in [0,T]$. Hence, by \eqref{todo}, there exist $\eta,C>0$ such that
\begin{equation}\label{esti}
w(T\wedge\tau_n, Y^c_{T\wedge\tau_n})\le C(1+(Y^c_{T\wedge\tau_n})^{-\eta})\le C(1+(\underline Y_{T\wedge\tau_n})^{-\eta})\le C\bigg(1+ \max_{s\in [t,T]}\underline Y^{-\eta}_s \bigg),
\end{equation}
where the second inequality results from $Y^c_{s}\ge \underline Y_s$ for all $s\in [0,T]$. 
Note that $\underline Y^{-\lambda}$ remains a geometric Brownian motion, so that its running maximum $\max_{s\in[t,T]} \underline Y^{-\lambda}_s$ has finite expectation. Now, as the right-hand side of \eqref{esti} is independent of $n\in\N$ and has finite expectation, we conclude from the reverse Fatou lemma that 
\[
\limsup_{n\to\infty}\E\left[e^{-\lambda(T\wedge\tau_n-t)}  w(T\wedge\tau_n,Y^c_{T\wedge\tau_n})\right]\le \E\left[e^{-\lambda(T-t)}   w(T,Y^c_{T})\right] =  \E\left[-e^{-\lambda(T-t)} \frac{(Y^c_{T})^{q_1}}{q_1}\right].
\]
As $n\to\infty$ in \eqref{Ito's on w}, thanks to the above inequality and applying the monotone convergence theorem to the expectation on the right-hand side of \eqref{Ito's on w}, we obtain 
\begin{equation*}
\E^{t,y}\left[\int_t^T  -e^{-\lambda(s-t)} f(s)\frac{(c_sY^c_s)^{q_2}}{q_2}\, ds - e^{-\lambda(T-t)} \frac{(Y^{c}_T)^{q_1}}{q_1}\right] \ge w(t,y), 
\end{equation*}
i.e., $J^c (t,y)\ge w(t,y)$. Taking infimum over all $c\in\mathcal C_b$ yields $\inf_{c\in \mathcal{C}_b} J^c(t,y)\ge w(t,y)$. 

{\bf Step 2:} Show that $\inf_{c\in \mathcal{C}_b}J^c(t,y)=\inf_{c\in \mathcal{C}}J^c(t,y)$. 
For any $c\in\mathcal C$ such that $J^c(t,y)<\infty$ (such a $c$ exists thanks to Remark~\ref{rem:J^c}), define $c^M\in\mathcal C_b$ by $c^M_s := c_s\wedge M$, $s\in[0,T]$, for all $M\in\N$. By Lemma~\ref{lem:c<=c'} and \eqref{exp form}, $Y^{c^M}_s\downarrow Y^c_s$ and $c^M_s Y^{c^M}_s\uparrow c_s Y_s$ $\P$-a.s.\ as $M\to\infty$, for all $s\in[t,T]$. With $q_1<0$, we conclude from $Y^{c^M}_s\downarrow Y^c_s$ and the monotone convergence theorem that 
\begin{equation}\label{mono}
\lim_{M\to\infty}\E^{t,y}\left[- e^{-\lambda(T-t)} \frac{(Y^{c^M}_T)^{q_1}}{q_1}\right] = \E^{t,y}\left[- e^{-\lambda(T-t)} \frac{(Y^{c}_T)^{q_1}}{q_1}\right]. 
\end{equation}
On the other hand, with $q_2<0$ and $c^M_s Y^{c^M}_s$ increasing in $M$, we have
\begin{align}
0\le \int_t^T  -e^{-\lambda(s-t)} f(s)\frac{(c^M_sY^{c^M}_s)^{q_2}}{q_2}\, ds &\le \int_t^T  -e^{-\lambda(s-t)} f(s)\frac{(c^1_sY^{c^1}_s)^{q_2}}{q_2}\, ds,\quad \forall M>1. \label{upper bound}
\end{align}
We claim that the right-hand side above has finite expectation. As $c^1_s = c_s\wedge 1$, we see that
\[
c^1_sY^{c^1}_s \ge c_s Y^{c^1}_s 1_{\{c_s\le 1\}} + Y^{c^1}_s 1_{\{c_s> 1\}} \ge c_s Y^{c}_s 1_{\{c_s\le 1\}} + \underline Y_s 1_{\{c_s> 1\}},
\]
where the last inequality follows from $Y^{c^1}_s \ge Y^c_s$ and $Y^{c^1}_s \ge\underline Y_s$, where $\underline Y$ is the geometric Brownian motion \eqref{Y_L} with $M=1$. As $q_2<0$, this implies $(c^1_sY^{c^1}_s)^{q_2} \le (c_s Y^{c}_s)^{q_2} 1_{\{c_s\le 1\}} + (\underline Y_s)^{q_2} 1_{\{c_s> 1\}} \le (c_s Y^{c}_s)^{q_2}+(\underline Y_s)^{q_2}$. It follows that
\begin{align*}
\E^{t,y}\bigg[ \int_t^T  -e^{-\lambda(s-t)} f(s)\frac{(c^1_sY^{c^1}_s)^{q_2}}{q_2}\, ds \bigg]&\le \E^{t,y}\bigg[ \int_t^T  -e^{-\lambda(s-t)} f(s)\frac{(c_sY^{c}_s)^{q_2}}{q_2}\, ds \bigg]\\
&\hspace{0.2in}+ \E^{t,y}\bigg[ \int_t^T  -e^{-\lambda (s-t)} f(s)\frac{(\underline Y_s)^{q_2}}{q_2}\, ds \bigg]<\infty, 
\end{align*}
where the finiteness holds because (i) the first expectation on the right-hand side is bounded from above by $J^c(t,y)<\infty$ and (ii) the second expectation on the right-hand side is finite due to $\underline Y$ being a geometric Brownian motion. This, along with \eqref{upper bound}, ensures that we can use the dominated convergence theorem to conclude that
\[
\lim_{M\to\infty} \E^{t,y}\bigg[ \int_t^T  -e^{-\lambda(s-t)} f(s)\frac{(c^M_sY^{c^M}_s)^{q_2}}{q_2}\, ds \bigg] = \E^{t,y}\bigg[ \int_t^T  -e^{-\lambda(s-t)} f(s)\frac{(c_sY^{c}_s)^{q_2}}{q_2}\, ds \bigg].
\]
Combining this and \eqref{mono} amounts to $\lim_{M\to\infty} J^{c^M}(t,y)=J^c(t,y)$. This then implies $\inf_{c\in \mathcal C_b} J^c(t,y) = \inf_{c\in \mathcal C} J^c(t,y)$. 

{\bf Step 3:} Show that $w(t,y)\ge \inf_{c\in \mathcal C} J^c(t,y)$. 
Take $c\in\mathcal C$ in \eqref{Y} to be the candidate feedback optimal control $c^{*}(t,y) := {(-w_y(t,y))}^{\frac{1}{q_2 -1}}/y$. Then, \eqref{Y} becomes \eqref{Y^*}, which admits a unique strong solution $Y^*$ that is strictly positive (see Lemma~\ref{lem:Y^*}). With $(c^*,Y^*)$ in place of $(c,Y^c)$ in \eqref{Ito's on w}, the inequality therein becomes an equality (as $L = (-w_y(t,y))^{\frac{1}{q_2 -1}}$ attains the infimum in \eqref{HJB3'}) and we thus obtain 
\begin{align}
&\E\left[e^{-\lambda(T\wedge\tau_n-t)}  w(T\wedge\tau_n, Y^*_{T\wedge\tau_n})\right]\notag\\
&= w(t,y)+ \E\bigg[\int_t^{T\wedge\tau_n} e^{-\lambda(s-t)} \bigg(w_t - \lambda w + \lambda\big(1- f(s)c^*_s\big) Y^*_s w_{y} + a_3 (Y^*_s)^2 w_{yy}\bigg) (s, Y_s^*)\, ds \bigg]\notag\\
&= w(t,y)+ \E\left[\int_t^{T\wedge\tau_n} e^{-\lambda(s-t)} a_1 f(s) \frac{(c^*_sY^*_s)^{q_2}}{q_2}\, ds\right],\label{Ito's on w'}
\end{align}
By applying Fatou's lemma to the expectation on the left-hand side (thanks to $w>0$) and the monotone convergence theorem to the expectation on the right-hand side, we get
\begin{equation*}
\E^{t,y}\left[\int_t^T  -e^{-\lambda(s-t)} f(s)\frac{(c^*_sY^*_s)^{q_2}}{q_2}\, ds - e^{-\lambda(T-t)} \frac{(Y^*_T)^{q_1}}{q_1}\right] \le w(t,y), 
\end{equation*}
which implies $\inf_{c\in\mathcal C} J^c(t,y)\le w(t,y)$.

Combining results in the above three steps gives $w(t,y)=\inf_{c\in\mathcal C} J^c(t,y)$.  
\end{proof}

\begin{corollary}\label{coro:convexity}
    The solution $w$ to \eqref{HJB3} obtained in Theorem~\ref{th.for.w} is strictly convex in $y$. 
\end{corollary}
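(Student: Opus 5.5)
We will derive strict convexity from the representation $w=\inf_{c\in\mathcal C}J^c$ in Theorem~\ref{thm:convexity of tw}, by reparametrizing the control so that the state variable enters linearly. The key observation is that, by \eqref{exp form}, $Y^c_s = y\,Z^c_s$, where $Z^c$ is the solution of \eqref{Y} started from $1$ at time $t$. Moreover $Z^c$ depends only on $c$ and $W$, not on $y$. Define the ``consumption amount'' process $\kappa_s := c_s Y^c_s = y\, c_s Z^c_s$. Then
\[
J^c(t,y)=\E\Big[\int_t^T -e^{-\lambda(s-t)}f(s)\frac{\kappa_s^{q_2}}{q_2}\,ds - e^{-\lambda(T-t)}\frac{(Y^c_T)^{q_1}}{q_1}\Big].
\]
The pair $(\kappa, Y)$ is linked by the linear dynamics
\[
dY_s=\lambda\big(Y_s - f(s)\kappa_s\big)\,ds-\sqrt{2\xi}\,Y_s\,dW_s .
\]
Its explicit solution is
\[
Y_s=\Gamma_s\Big(y-\lambda\int_t^s f(u)\kappa_u\Gamma_u^{-1}du\Big),
\]
where $\Gamma$ is the geometric Brownian motion with drift $\lambda$ and volatility $-\sqrt{2\xi}$ started at $1$.

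Hence we will reformulate $w(t,y)=\inf_{\kappa\in\mathcal K(y)} \tilde J(y,\kappa)$. Here $\mathcal K(y)$ is the set of nonnegative progressively measurable $\kappa$ for which the corresponding $Y$ (given by the formula above) stays strictly positive on $[t,T]$. The functional $\tilde J(y,\kappa)$ is the expression above with $Y=Y^{y,\kappa}$ affine in $(y,\kappa)$. We need to check that this is a bijective correspondence with $c\in\mathcal C$. Given $c$, $\kappa=cY^c$ works; conversely, given such $\kappa$, set $c=\kappa/Y$, and the integrability in $\mathcal C$ holds pathwise since $Y>0$ is continuous. Then for $y_0\neq y_1$, $\alpha\in(0,1)$ and $\eps$-optimal $\kappa^0,\kappa^1$, the convex combination $\kappa^\alpha=\alpha\kappa^0+(1-\alpha)\kappa^1$ is admissible for $y_\alpha=\alpha y_0+(1-\alpha)y_1$. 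Indeed $Y^{y_\alpha,\kappa^\alpha}=\alpha Y^0+(1-\alpha)Y^1>0$ by affinity. Since $x\mapsto -x^{q}/q$ is strictly convex and decreasing on $(0,\infty)$ for $q<0$, we get $\tilde J(y_\alpha,\kappa^\alpha)\le \alpha \tilde J(y_0,\kappa^0)+(1-\alpha)\tilde J(y_1,\kappa^1)$. Letting $\eps\to0$ gives convexity of $w(t,\cdot)$.

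For strictness, which we expect to be the main obstacle, the $\eps$-optimal argument only yields non-strict inequalities. To get a strict gap we will use the optimal feedback control from Step~3 of Theorem~\ref{thm:convexity of tw}. That step gives $J^{c^*}(t,y)\le w(t,y)$, so $c^*$ attains the infimum, and therefore exact minimizers $\kappa^0,\kappa^1$ exist. Then strictness in the terminal term follows provided $P(Y^0_T\ne Y^1_T)>0$.

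Suppose instead $Y^0_T=Y^1_T$ a.s. Then we will argue via the running term: if also $\kappa^0=\kappa^1$ $ds\otimes d\P$-a.e., the explicit formula gives $Y^0_T-Y^1_T=\Gamma_T(y_0-y_1)\neq0$, a contradiction. So either the terminal values or the consumption paths differ on a set of positive measure. In either case strict convexity of $x\mapsto -x^{q_i}/q_i$ gives a strict inequality
\[
w(t,y_\alpha)\le\tilde J(y_\alpha,\kappa^\alpha)<\alpha w(t,y_0)+(1-\alpha)w(t,y_1).
\]
All expectations involved are finite, because $J^{c^*}\le w<\infty$. The remaining care points are the finiteness of the expectations and the positivity of the convex combination, both of which follow from the affine structure.
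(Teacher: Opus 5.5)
Your proposal is correct. For the convexity part it follows the paper's route: the paper also passes to the consumption amount $C=cY$, uses that the state equation is then linear in the initial point and the control, and applies convexity of $x\mapsto -x^{q}/q$ for $q<0$ to obtain $w(t,\tfrac{y_1+y_2}{2})\le\tfrac12[w(t,y_1)+w(t,y_2)]$.

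The difference is that the paper stops there (``whence the claim''). Its proof therefore only gives (midpoint) convexity. It does not check that the averaged control is admissible for the averaged initial point; its class $\tilde{\mathcal C}$ is never really specified. It also never addresses strictness. You supply both missing pieces:
\begin{itemize}
\item Admissibility: you define the set $\mathcal K(y)$ explicitly, verify the bijection $c\leftrightarrow\kappa=cY^c$, and get positivity of $\alpha Y^0+(1-\alpha)Y^1$ from affinity.
\item Strictness: you take exact minimizers from the feedback control $c^*$ of Step~3 of Theorem~\ref{thm:convexity of tw}. You then use the dichotomy that either $\P(Y^0_T\neq Y^1_T)>0$ or $\kappa^0\neq\kappa^1$ on a $ds\otimes d\P$-non-null set, since otherwise $Y^0_T-Y^1_T=\Gamma_T(y_0-y_1)\neq 0$.
\end{itemize}
Together with finiteness of $w(t,y_i)=\tilde J(y_i,\kappa^i)$ and strict convexity of the two power functions, this yields the strict inequality. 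So your argument proves the statement as claimed, whereas the paper's written proof does not.

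Two small details are worth writing out. First, you need $c^*\in\mathcal C$. This holds pathwise because $s\mapsto(-w_y(s,Y^*_s))^{1/(q_2-1)}/Y^*_s$ is continuous on $[t,T]$, using Lemma~\ref{lem:Y^*}. It is exactly what lets you combine Steps~1--2 with Step~3 to conclude $J^{c^*}(t,y)=w(t,y)$, rather than only $J^{c^*}(t,y)\le w(t,y)$. Second, the case $t=T$ is immediate, since $w(T,y)=-y^{q_1}/q_1$.
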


\begin{proof}
Let us prove $w(t,y) = \bar{w}(t,y):=\inf_{c\in\mathcal C} J^c(t,y)$ is convex in $y.$ Indeed let $y_1, y_2$ positive
 and
 $$ dY^i_t = (Y^i_t a_1 - p_2 f(t) C^i_t)dt+ Y^i_t \sqrt{2a_3} dW_t, Y^i_0=y_i, C^i\in \tilde{\mathcal{C}}, i=1,2,
  $$
where $\tilde{\mathcal{C}}$  is the set of controls so corresponding to $\mathcal{C}$ ($c\in \mathcal{C}$ iff $C=cY\in \tilde{\mathcal{C}}$)
 Then
 $$\E\left[\int_t^T  -e^{-a_1(s-t)}p_2 f(s)\frac{\left(   \frac{C^{1}_T + C^{2}_T}{2}  \right)^{q_2}}{q_2} - e^{-a_1(T-t)}  \frac{\left( \frac{Y^{1}_T + Y^{2}_T}{2} \right)^{q_1} }{q_1}\ \middle|\  \frac{Y^{1}_t+ Y^{2}_t}{2} =  \frac{y_1 + y_2}{2}   \right]$$
 $$ \leq\frac{1}{2}\E\left[\int_t^T  -e^{-a_1(s-t)}p_2 f(s)\frac{(C^{1}_s)^{q_2}}{q_2} - e^{-a_1(T-t)} \frac{(Y^{1}_T)^{q_1}}{q_1}\ \middle|\ Y^1_t =y_1 \right]  $$
 $$\quad +\frac{1}{2}\E\left[\int_t^T  -e^{-a_1(s-t)}p_2 f(s)\frac{(C^{2}_s)^{q_2}}{q_2} - e^{-a_1(T-t)} \frac{(Y^{2}_T)^{q_1}}{q_1}\ \middle|\ Y^2_t =y_2 \right]  $$
 by the concavity of functions $\frac{x^{q_1}}{q_1}$ and $\frac{x^{q_2}}{q_2},$ ({ since $q_1, q_2$ are negative}). Thus
 by taking infimum over $C^i\in \tilde{\mathcal C}, i=1,2,$
 $$\inf_{C^1,C^2\in \tilde{\mathcal C}} \E\left[\int_t^T  -e^{-a_1(s-t)}p_2 f(s)\frac{\left(   \frac{C^{1}_T + C^{2}_T}{2}  \right)^{q_2}}{q_2} - e^{-a_1(T-t)}  \frac{\left( \frac{Y^{1}_T + Y^{2}_T}{2} \right)^{q_1} }{q_1}\ \middle|\  \frac{Y^{1}_t+ Y^{2}_t}{2} =  \frac{y_1 + y_2}{2}   \right]$$$$\leq\ \frac{1}{2} [\bar{w}(t,y_1)+\bar{w}(t,y_2)],$$
 or
 $$ \bar{w}\left(t, \frac{y_1 + y_2}{2}\right) \leq\ \frac{1}{2} [\bar{w}(t,y_1)+\bar{w}(t,y_2)] $$
whence the claim.
\end{proof}

\section{The Main Result}\label{sec:main}
Consider the solution $w$ to \eqref{HJB3} obtained in Theorem~\ref{th.for.w} and define
\begin{equation}\label{tv}
\tv(t,x):= \inf_{y> 0}\{w(t,y)+xy\}\ge 0,\quad \forall (t,x)\in[0,T]\times (0,\infty).
\end{equation}
By the smoothness and strict convexity of $y\mapsto w(t,y)$ (Theorem~\ref{thm:convexity of tw} and Corollary~\ref{coro:convexity}), the inverse function of $y\mapsto\tw_y(t,y)$, denoted by $x\mapsto H(t,x)$, is well-defined, differentiable, and such that 
\begin{equation}\label{H}
    w_y(t,H(t,x))=-x,\quad \tv(t,x) = w(t,H(t,x))+x H(t,x),\quad \forall (t,x)\in[0,T]\times(0,\infty). 
\end{equation}
By direct calculation,
\begin{align}
  &\tv_t(t,x) = w_t(t,H(t,x)),\quad \tv_x(t,x) = H(t,x),\label{tw to tv}\\
  &\tv_{xx} (t,x)= H_x(t,x) = \frac{-1}{w_{yy}(t,H(t,x))}< 0,\label{tw to tv'} 
\end{align}
where the inequality follows again from the strict convexity of $w$. 

\begin{lemma}\label{lem:tv}
    $\tv$ defined in \eqref{tv} is a classical solution to the HJB equation \eqref{HJB v_1}. Moreover, there exists some $\widetilde C>0$ such that
    \begin{equation}\label{estimate}
f(t)\frac{x^{p_{1}}}{p_{1}}
 \leq \tv(t,x)\leq  \widetilde{C} (1+ x^{\frac{\eta}{\eta+1}}),
 \end{equation}
 where $f(t)$ is given by \eqref{f} and $\eta>0$ is taken from \eqref{todo}.
\end{lemma}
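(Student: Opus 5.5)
The proof has three parts: the PDE identity, the upper bound, and the lower bound. For the PDE, the plan is to evaluate \eqref{HJB3} at $y=H(t,x)$ and use the relations \eqref{H}, \eqref{tw to tv} and \eqref{tw to tv'}. These give
\[
w(t,H)=\tv-x\tv_x,\qquad w_y(t,H)=-x,\qquad w_t(t,H)=\tv_t,\qquad w_{yy}(t,H)=-1/\tv_{xx}.
\]
Substituting into \eqref{HJB3} yields
\[
\tv_t-\lambda(\tv-x\tv_x)-\lambda x\tv_x-\xi\frac{\tv_x^2}{\tv_{xx}}+\lambda f(t)\frac{x^{p_2}}{p_2}=0,
\]
which is exactly \eqref{HJB1} with $v(t,x,2)=f(t)x^{p_2}/p_2$. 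Since $\tv_{xx}<0$, the supremum in \eqref{HJB v_1} is attained and equals $-\xi\tv_x^2/\tv_{xx}$, so \eqref{HJB v_1} holds.

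For regularity, $H$ is $C^1$ in $(t,x)$ by the implicit function theorem, since $w_{yy}>0$; here I need strict positivity, not just strict convexity. I would obtain it by noting that the Legendre duality only needs $w_y$ to be strictly monotone and $C^1$. Where $w_{yy}$ might vanish I would argue via the PDE, but I will flag this as a point to check. Then $\tv_x=H$ and $\tv_{xx}=-1/w_{yy}(t,H)$ are continuous, so $\tv\in C^{1,2}$.

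I also need $H$ to be defined for every $x>0$, i.e.\ $-w_y(t,\cdot)$ maps $(0,\infty)$ onto $(0,\infty)$. Convexity and monotonicity give $w_y\to 0$ as $y\to\infty$, since $w$ is decreasing and bounded below by $0$. For $y\to 0$, the lower bound \eqref{inf tw} makes $w$ blow up like $y^{q_1}$, and convexity then forces $w_y\to-\infty$. The terminal condition is direct: $\tv(T,x)=\inf_y\{-y^{q_1}/q_1+xy\}=x^{p_1}/p_1$.

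For the upper bound in \eqref{estimate}, I would use \eqref{todo}: $\tv(t,x)\le C(1+y^{-\eta})+xy$ for every $y>0$. Choosing $y=x^{-1/(\eta+1)}$ gives $\tv\le C+(C+1)x^{\eta/(\eta+1)}$.

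For the lower bound, note that the term $-\lambda w+\lambda f\,(-w_y)^{p_2}/p_2$ in \eqref{HJB3} is awkward to compare directly. I would instead work on the primal side. From \eqref{HJB v_1}, $\tv$ satisfies
\[
\tv_t+\sup_z[\cdots]-\lambda\tv+\lambda f\frac{x^{p_2}}{p_2}=0,
\]
and the extra source term $\lambda f x^{p_2}/p_2$ is nonnegative. I would check that $g(t,x)=f(t)x^{p_1}/p_1$ is a subsolution of this equation. Merton-type calculus gives
\[
g_t-\xi\frac{g_x^2}{g_{xx}}=\Bigl(\xi+\xi\frac{p_1}{1-p_1}\Bigr)g=\frac{\xi}{1-p_1}\,g>0,
\]
which has the wrong sign for a subsolution. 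So instead I would compare through the dual: show $w(t,y)\ge f(t)^{1/(1-p_1)}\,(-y^{q_1}/q_1)$ by a comparison for \eqref{HJB3}, and then take conjugates.

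I expect this lower bound to be the main obstacle, because the obvious comparison candidates have the wrong sign or do not match exactly. As a first attempt I would take the candidate $\underline w:=-\phi(t)y^{q_1}/q_1$ in \eqref{HJB3}, dropping the positive nonlinear term, which yields an ODE for $\phi$. I would then apply the verification inequality of Theorem~\ref{thm:convexity of tw}, Step 1, with $c$ arbitrary, to get $w\ge\underline w$, and finally compute the conjugate $\inf_y\{\underline w+xy\}=\phi^{1-p_1}x^{p_1}/p_1$. This reduces the lower bound to showing $\phi(t)^{1-p_1}\ge f(t)$. If that fails, I would derive the bound directly from \eqref{inf tw}, conjugating to get $e^{(q_1-1)(\lambda+\xi q_1)(T-t)(1-p_1)}x^{p_1}/p_1$, and compare this exponent with $-\xi(T-t)$.
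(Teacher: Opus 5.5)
\textbf{Where the proposal breaks: the lower bound.} Your treatment of the PDE identity, the terminal condition and the upper bound is the same as the paper's proof. Your check that $-w_y(t,\cdot)$ maps $(0,\infty)$ onto $(0,\infty)$ is a useful addition. The point you flag, that strict convexity alone does not give $w_{yy}>0$, is left open in the paper too: it simply asserts $\tv_{xx}<0$ ``by strict convexity''. So you are not missing an idea the paper supplies there.

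The genuine gap is the lower bound. Your proposal ends by reducing it to $\phi(t)^{1-p_1}\ge f(t)$ without checking it, and this inequality is false in general. Solving the ODE obtained by dropping the nonlinear term gives exactly $\phi(t)=\exp\{(q_1-1)(\lambda+\xi q_1)(T-t)\}$, the constant in \eqref{inf tw}. So your first attempt and your fallback are the same bound. (Also, Step~1 of Theorem~\ref{thm:convexity of tw} bounds $w$ from \emph{above}, not below; the inequality $w\ge\underline w$ you want is already \eqref{inf tw}.) Since $(q_1-1)(1-p_1)=-1$, conjugating gives
\[
\tv(t,x)\ \ge\ e^{-(\lambda+\xi q_1)(T-t)}\,\frac{x^{p_1}}{p_1}.
\]
This dominates $f(t)x^{p_1}/p_1=e^{-\xi(T-t)}x^{p_1}/p_1$ if and only if $\lambda\le \xi(1-q_1)=\xi/(1-p_1)$. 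The paper's one-line argument (plug \eqref{inf tw} into \eqref{tv}) is exactly this computation, so it carries the same restriction.

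You also dismissed the primal route for the wrong reason. For a backward equation, a subsolution needs $g_t+\sup_z[\cdots]-\lambda g+\lambda f x^{p_2}/p_2\ge 0$. For $g=f(t)x^{p_1}/p_1$, once you restore the $-\lambda g$ term you dropped, this expression is $\bigl(\frac{\xi}{1-p_1}-\lambda\bigr)g+\lambda f x^{p_2}/p_2$. A positive value is the correct sign for a subsolution, and this is nonnegative under the very same condition $\lambda\le\xi/(1-p_1)$.

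\textbf{The stated bound actually fails for large $\lambda$.} When $\lambda>\xi/(1-p_1)$, no comparison function can rescue it. Apply the verification inequality of Step~1 in the proof of Theorem~\ref{thm:convexity of tw} to the bounded control $c_s=\eps e^{\xi(T-s)}$. For this control $f(s)c_s=\eps$ and $Y^c$ is a geometric Brownian motion, so you get $w(t,y)\le \phi_\eps(t)\bigl(-y^{q_1}/q_1\bigr)+A_\eps(t)\,y^{q_2}$, with $\phi_\eps(t)=\phi(t)e^{-\eps\lambda q_1(T-t)}\to\phi(t)$ and $A_\eps(t)<\infty$. Evaluating the infimum in \eqref{tv} at $y=(\phi_\eps(t)/x)^{1-p_1}$ yields
\[
\tv(t,x)\le \phi_\eps(t)^{1-p_1}\frac{x^{p_1}}{p_1}+A'_\eps(t)\,x^{\frac{(1-p_1)p_2}{1-p_2}},
\qquad \frac{(1-p_1)p_2}{1-p_2}<p_2<p_1 .
\]
Take $t<T$ and $\eps$ small enough that $\phi_\eps(t)^{1-p_1}<f(t)$, which is possible because $\phi(t)^{1-p_1}<f(t)$ strictly. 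Then $\tv(t,x)<f(t)x^{p_1}/p_1$ for all large $x$.

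\textbf{What is actually proved.} Both your route and the paper's establish the lower bound with $e^{-(\lambda+\xi q_1)(T-t)}$ in place of $f(t)$. Equivalently, they prove \eqref{estimate} as stated only under the extra hypothesis $\lambda\le\xi/(1-p_1)$. This weaker form is enough for what comes later, since only the upper bound in \eqref{estimate} enters the verification theorem.
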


\begin{proof}

First, observe that
\begin{align}
&\tv_{t}(t,x) + \sup_{z\in\R}\left[\mu z x\tv_{x}(t,x)+\frac{1}{2}\sigma^{2} z^{2}x^{2}\tv_{xx}(t,x)\right]-\lambda[\tv(t,x)-f(t) x^{p_2}/p_2]\\
&=\tv_t(t,x) - \lambda[\tv(t,x)-f(t) x^{p_2}/p_2] - \xi \frac{(\tv_x(t,x))^2}{\tv_{xx}(t,x)}\nonumber\\
&= w_t(t,H(t,x)) - \lambda w(t,H(t,x))+ \lambda H(t,x) w_y(t,H(t,x))\nonumber\\
&\hspace{1in}+\xi H^2(t,x) w_{yy}(t,H(t,x))+\lambda f(t)[-w_y(t,H(t,x))]^{p_2}/p_2=0, \label{pde for tv}
\end{align}
where the first equality follows from $\tv_{xx}<0$ (recall \eqref{tw to tv'}), the second equality is due to the joint application of \eqref{H}, \eqref{tw to tv}, and \eqref{tw to tv'}, and the last equality holds because $w$ is a solution to \eqref{HJB3} (Theorem~\ref{th.for.w}).  
Also, by definition,
\begin{equation}\label{bc for tv}
\tv(T,x) = \inf_{y\ge 0}\{w(T,y)+xy\}=\inf_{y\ge 0}\{-y^{q_1}/q_1+xy\}=\frac{q_1-1}{q_1}x^{\frac{q_1}{q_1-1}}=x^{p_1}/p_1.
\end{equation}
We thus conclude that $\tv$ is a classical solution to \eqref{HJB v_1}. Next, recall the lower and upper bounds of $w(t,y)$ in \eqref{inf tw} and \eqref{todo}. Plugging them into \eqref{tv} directly yields \eqref{estimate}.
Theorem~\ref{th.for.w}
\end{proof}

\begin{proposition}\label{prop:UI}
Let $w$ be the solution $w$ to \eqref{HJB3} obtained in Theorem~\ref{th.for.w}. 
For any $t\in[0,T)$ and $y>0$, let $Y^*$ be the unique strong solution to \eqref{Y^*} and define
\begin{equation}\label{X^*}
X^*_s := -w_y(s,Y^*_s),\quad \hbox{for}\ s\in [t,T].    
\end{equation}
Then, 
\begin{equation}\label{hat zeta}
\zeta^*_s := -\frac{\mu \tv_x(s,X^*_s)}{\sigma^2 X^*_s \tv_{xx}(s,X^*_s)},\quad s\in [t,T],
\end{equation}
belongs to $\AA(t)$ and $X^*$ is the unique strong solution to \eqref{equ:wealth-one} with $x = -\tw_y(t,y)>0$ and $\zeta = \hat\zeta$ therein. Moreover, for any $\ell\in (0,1)$, 
\begin{equation}\label{E[X^eta]<infty}
\E^{t,x}[(X^*_{\tau\wedge T})^\ell] \le x^\ell \exp\bigg(\frac{\mu^2\ell}{2\sigma^2(1-\ell)}T\bigg)\quad \forall \hbox{stopping times}\ \tau,
\end{equation}
and $\{(X^*_{\tau\wedge T})^{\ell}:\tau\ \hbox{is a stopping time}\}$ is uniformly integrable. 
\end{proposition}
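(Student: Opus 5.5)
Our approach is to apply It\^o's formula to $X^*_s=-w_y(s,Y^*_s)$, identify its dynamics with \eqref{equ:wealth-one}, and then bound the moments of $X^*$ by Hölder's inequality and a deflator argument.

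\textbf{Step 1: dynamics of $X^*$.} Since $w\in C^{1+\gamma/2,2+\gamma}$, one more derivative in $y$ is needed. Differentiating \eqref{HJB3} in $y$ (justified by interior parabolic regularity, as the coefficients are smooth in $y$ on $(0,\infty)$) gives an equation for $u:=w_y$:
\[
u_t+2\xi y u_y+\xi y^2u_{yy}+\lambda y u_y - \lambda f(t)(-u)^{p_2-1}u_y=0 .
\]
The $-\lambda w$ and $+\lambda u$ terms cancel. We then apply It\^o's formula to $-u(s,Y^*_s)$ with the drift of \eqref{Y^*}. Using $(-w_y)^{1/(q_2-1)}=(-w_y)^{p_2-1}$ (since $1/(q_2-1)=p_2-1$), the drift terms collapse by the equation above to $2\xi Y^*(-u_y)\,ds$. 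The martingale part is $\sqrt{2\xi}Y^*u_y\,dW$. So
\[
dX^*_s = -2\xi Y^*_s w_{yy}\,ds+\sqrt{2\xi}\,Y^*_s w_{yy}\,dW_s .
\]
By \eqref{H}, \eqref{tw to tv} and \eqref{tw to tv'}, with $H(s,X^*_s)=Y^*_s$, we have $Y^*w_{yy}=-\tv_x/\tv_{xx}$ evaluated at $X^*_s$. Hence $X^*_s\zeta^*_s=-\frac{\mu}{\sigma^2}\tv_x/\tv_{xx}$. Since $2\xi=\mu^2/\sigma^2$ and $\sqrt{2\xi}=\mu/\sigma$, this is exactly $dX^*=X^*\zeta^*(\mu\,ds+\sigma\,dW)$, where $X^*_t=-w_y(t,y)=x>0$ by Theorem~\ref{th.for.w}(ii).

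\textbf{Step 2: admissibility.} $X^*>0$ and continuous, and $Y^*$ is continuous, positive and non-explosive by Lemma~\ref{lem:Y^*}. So $\zeta^*_s=\sqrt{2\xi}\,Y^*_sw_{yy}(s,Y^*_s)/(\sigma X^*_s)$ is continuous on $[t,T]$, hence pathwise bounded, and $\int_t^T(\zeta^*)^2<\infty$ a.s. Uniqueness of $X^*$ as a strong solution then follows from the explicit formula \eqref{equ:wealth-two} for a given $\zeta$.

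\textbf{Step 3: the moment bound.} Let $Z_s=\exp(-\frac{\mu}{\sigma}(W_s-W_t)-\frac{\mu^2}{2\sigma^2}(s-t))$ be the state-price density. Then $ZX^*$ is a positive local martingale, hence a supermartingale, so $\E[Z_{\tau\wedge T}X^*_{\tau\wedge T}]\le x$ by optional sampling for bounded stopping times. Hölder's inequality with exponents $1/\ell$ and $1/(1-\ell)$ gives
\[
\E[(X^*_{\tau\wedge T})^\ell]\le \E[Z X^*]^\ell\,\E[Z_{\tau\wedge T}^{-\ell/(1-\ell)}]^{1-\ell}.
\]
$Z^{-\ell/(1-\ell)}$ is an exponential of Brownian motion. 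Writing it as a martingale times $\exp(\frac{\mu^2}{2\sigma^2}\frac{\ell}{(1-\ell)^2}(s-t))$ and applying optional sampling to the martingale, we get $\E[Z_{\tau\wedge T}^{-\ell/(1-\ell)}]^{1-\ell}\le \exp(\frac{\mu^2\ell}{2\sigma^2(1-\ell)}T)$. This yields \eqref{E[X^eta]<infty}.

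\textbf{Step 4: uniform integrability.} For $\ell\in(0,1)$, pick $\ell'\in(\ell,1)$. The bound of Step 3 with $\ell'$ gives $\sup_\tau\E[((X^*_{\tau\wedge T})^\ell)^{\ell'/\ell}]<\infty$ with $\ell'/\ell>1$, so de la Vallée-Poussin gives uniform integrability.

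The main obstacle is Step 1. It requires $w_y\in C^{1,2}$, which needs an interior bootstrap from the Hölder regularity of Theorem~\ref{th.for.w} via differentiating the PDE. Step 1 also requires checking that the nonlinear drift terms cancel exactly.
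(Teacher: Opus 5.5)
\textbf{Step 1 has a sign error.} By the differentiated equation, the drift of $u(s,Y^*_s)$ with $u=w_y$ is $-2\xi Y^*_s u_y$. Hence the drift of $X^*_s=-u(s,Y^*_s)$ is $+2\xi Y^*_s w_{yy}(s,Y^*_s)$, not $-2\xi Y^*_s w_{yy}$. It looks as if you took the drift of $u$ but the martingale part of $-u$.

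As written, the step fails. Since $w_{yy}>0$, your displayed SDE has drift and diffusion coefficients of opposite signs. With your own identification $Y^*w_{yy}=-\tv_x/\tv_{xx}$, its drift equals $-\mu X^*\zeta^*$, so it is not \eqref{equ:wealth-one}. With the sign corrected,
\[
dX^*_s=2\xi Y^*_s w_{yy}(s,Y^*_s)\,ds+\sqrt{2\xi}\,Y^*_s w_{yy}(s,Y^*_s)\,dW_s ,
\]
which is exactly the paper's formula. From there your Step 1 coincides with the paper, including the regularity bootstrap for $w_{ty}$ and $w_{yyy}$, which the paper takes from Friedman's regularity theorem.

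\textbf{Steps 2 and 4.} Your Step 2 is more explicit than the paper, which infers admissibility directly from $X^*$ solving the wealth equation. Pathwise continuity of $\zeta^*=\sqrt{2\xi}\,Y^*w_{yy}/(\sigma X^*)$ on $[t,T]$ makes $\int_t^T(\zeta^*_u)^2\,du<\infty$ immediate. Step 4 is the same de la Vall\'ee-Poussin argument as in the paper.

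\textbf{Step 3 is a genuinely different route to the same constant.} The paper writes $(X^*_s)^\ell$ as $x^\ell$ times three factors:
\begin{itemize}
\item $\exp\{\int_t^s(\ell\mu\zeta^*_u-\tfrac{\ell(1-\ell)\sigma^2}{2}(\zeta^*_u)^2)\,du\}$, which it bounds pathwise by maximizing the quadratic in $\zeta$;
\item $\exp\{\int_t^s\ell\sigma\zeta^*_u\,dW_u-\tfrac{\ell^2\sigma^2}{2}\int_t^s(\zeta^*_u)^2\,du\}$, a positive local martingale whose supermartingale property finishes the bound.
\end{itemize}
(That is two factors, not three.) You instead use the budget constraint $\E[Z_{\tau\wedge T}X^*_{\tau\wedge T}]\le x$ for the state-price density together with H\"older's inequality with exponents $1/\ell$ and $1/(1-\ell)$. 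The constant then appears as the $(1-\ell)$-th power of $\E[Z_{\tau\wedge T}^{-\ell/(1-\ell)}]$.

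In both cases the bound is the Merton value for the power $x^\ell$. The paper obtains it primally: a pointwise optimization and one supermartingale, with no H\"older. You obtain it dually: this needs the explicit deflator moment, but carries over verbatim to any setting with a supermartingale deflator. Neither argument uses anything about $\zeta^*$ beyond admissibility and positivity of wealth, so your version is valid once the sign in Step 1 is fixed.
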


\begin{proof}
Recall from Theorem~\ref{th.for.w} that $w\in C^{1+\gamma/2,2+\gamma}([0,T]\times\R^d)$ for some $\gamma\in(0,1)$. As the term $g(t,y) := \lambda f(t)\frac{(- w_{y}(t,y))^{p_2}}{p_2}$ in \eqref{HJB3} satisfies $g_y\in C^{\gamma/2,\gamma}([0,T]\times\R^d)$, 
\cite[Theorem 10, p.72]{Friedman-book-64} implies that $w_{yyy}$ and $w_{ty}$ both exist and belong to $C^{\gamma/2,\gamma}([0,T]\times\R^d)$. Hence, by applying It\^{o}'s formula to $X^*$ in \eqref{X^*}, we get
\begin{align}
    dX^*_s &= -w_{yt}(s,Y^*_s)\, ds - w_{yy}(s,Y^*_s)\, dY^*_s-\frac12 w_{yyy}(s,Y^*_s) (dY^*_s)^2\notag\\
    &= \bigg[-w_{yt}(s,Y^*_s)-w_{yy}(s,Y^*_s) \left(\lambda Y_s - \lambda f(s) {\big(-w_y(s,Y_s)\big)}^{\frac{1}{q_2 -1}} \right) \notag\\
    &\hspace{1.5in}- \xi (Y^*_s)^2 w_{yyy}(s,Y^*_s)\bigg]\, ds + w_{yy}(s,Y^*_s)\sqrt{2\xi} Y^*_s\,  dW_s.\label{dX^*}
\end{align}
where the second line follows from \eqref{Y^*}. Moreover, $-w_{ty}$ can be computed from \eqref{HJB3} as 
\begin{align*}
    -w_{yt} = -w_{ty} = \lambda y w_{yy} + 2 \xi y w_{yy} + \xi y^2 w_{yyy}-\lambda f(t) (-w_y)^{p_2-1} w_{yy}.   
\end{align*}
Plugging this back into \eqref{dX^*} yields 
\begin{equation*}
    dX^*_s = 2 \xi Y^*_s w_{yy}(s,Y^*_s)\, ds + w_{yy}(s,Y^*_s)\sqrt{2\xi} Y^*_s\, dW_s. 
\end{equation*}
Thanks to \eqref{X^*}, we may apply \eqref{tw to tv} and \eqref{tw to tv'} (with $H(s,X^*_s) = Y^*_s$ therein) to the above dynamics and get 
\begin{align*}
    dX^*_s &= -2 \xi \frac{\tv_x(s,X^*_s)}{\tv_{xx}(s,X^*_s)}\, ds - \sqrt{2\xi} \frac{\tv_x(s,X^*_s)}{\tv_{xx}(s,X^*_s)}\, dW_s
    = X^*_s \left(\mu \hat\zeta_s\, ds + \sigma \hat\zeta_s\, dW_s\right), 
\end{align*}
where the last equality follows from $\xi = \frac{\mu^2}{2\sigma^2}$ (recall $\eqref{q_1}$), \eqref{hat zeta}, $X^*_s=-w_y(s,Y^*_s)$ being strictly positive. This readily shows that $X^*$ in \eqref{X^*} is a strong solution to \eqref{equ:wealth-one} (with $\zeta = \hat\zeta$ therein), so that $\hat\zeta$ is an admissible portfolio process (Definition~\ref{def:portfolio-proportions}). 

Fix $\ell\in(0,1)$. Thanks to \eqref{equ:wealth-two}, for any $s\ge t$, 
\begin{align*}
(X^*_s)^\ell &= x^{\ell} \exp\left\{ \int_t^s\Big(\ell\mu\zeta^*_u-\frac{\ell\sigma^2}{2}(\zeta^*_u)^2\Big)\, du+\int_t^s \ell\sigma\zeta^*_u \, dW(u) \right\}\\
&= x^{\ell} \exp\left\{ \int_t^s\Big(\ell\mu\zeta^*_u-\frac{\ell(1-\ell)\sigma^2}{2}(\zeta^*_u)^2\Big)\, du\right\}\\
&\hspace{1in} \cdot \exp\left\{ -\int_t^s\frac{\ell^2\sigma^2}{2}(\zeta^*_u)^2\, du +\int_t^s \ell\sigma\zeta^*_u \, dW(u)\right\}\\
&\le x^\ell \exp\bigg(\frac{\mu^2\ell T}{2\sigma^2(1-\ell)}\bigg) \exp\left\{ -\int_t^s\frac{\ell^2\sigma^2}{2}(\zeta^*_u)^2\, du +\int_t^s \ell\sigma\zeta^*_u \, dW(u)\right\},
\end{align*}
where the inequality holds because the maximum of the function $z\mapsto \mu z-\frac{(1-\ell)\sigma^2}{2}z^2$ on $\R$ is $\frac{\mu^2}{2\sigma^2(1-\ell)}$.
Note that the exponential term on the right-hand side above is a nonnegative local martingale (as it fulfills the dynamics $dZ_s = \ell\sigma\zeta^*_s Z_s\, dW_s$, $Z_t=1$), whence a supermartingale. As a result, for any stopping time $\tau$, by taking $s=\tau\wedge T$ in the above inequality and computing expectations on both sides, we get \eqref{E[X^eta]<infty}. Finally, as $\ell\in (0,1)$, there exists $p>1$ such that $\bar\ell := p\ell\in (\ell,1)$. Then, 
\[
\E\left[\left((X^*_{\tau\wedge T})^\ell\right)^p\right] = \E\left[(X^*_{\tau\wedge T})^{\bar\ell}\right]\le  x^{\bar\ell} \exp\bigg(\frac{\mu^2\bar \ell}{2\sigma^2(1-\bar \ell)}T\bigg),\quad \forall\hbox{ stopping time}\ \tau. 
\]
where the inequality follows from \eqref{E[X^eta]<infty}. With $p>1$ and the constant on the right-hand side above independent of $\tau$, we conclude that $\{(X^*_{\tau\wedge T})^{\ell}:\tau\ \hbox{is a stopping time}\}$ is uniformly integrable. 
\end{proof}

\begin{theorem}
For any $(t,x)\in[0,T]\times(0,\infty)$,
$
v(t,x,{1}) = \tv(t,x).
$

Moreover, 
\begin{equation}\label{optimal zeta}
   \hat \zeta_s := -\frac{\mu \tv_x(s,X^*_s)}{\sigma^2 X^*_s \tv_{xx}(s,X^*_s)} 1_{\{s<\theta\}} + \frac{\mu}{(1-p_2)\sigma^2}  1_{\{s\ge \theta\}},\quad s\in[t,T]
\end{equation}
is an optimal control for $v(t,x,1)$ (recall \eqref{v}).
\end{theorem}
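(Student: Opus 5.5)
The argument is a verification argument built on the Markov-modulated It\^o computation \eqref{Ito's'}, applied with $\tv$ in place of $v(\cdot,\cdot,1)$ and with the explicit $v(\cdot,\cdot,2)$ from \eqref{f}. I will prove the two inequalities $\E^{t,x,1}[U(X^\zeta_T,\epsilon_T)]\le \tv(t,x)$ for every $\zeta\in\AA(t)$ and $\E^{t,x,1}[U(X^{\hat\zeta}_T,\epsilon_T)]= \tv(t,x)$, and then conclude.

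\textbf{Upper bound.} Fix $\zeta\in\AA(t)$ and set $V(s,x,1):=\tv(s,x)$, $V(s,x,2):=f(s)x^{p_2}/p_2$. Since $v(\cdot,\cdot,2)$ satisfies \eqref{HJB v_2}, the post-event part of the It\^o expansion has nonpositive drift for any strategy. By Lemma~\ref{lem:tv}, the pre-event drift $\tv_t+\mu\zeta x\tv_x+\frac12\sigma^2\zeta^2x^2\tv_{xx}-\lambda(\tv-V(\cdot,\cdot,2))$ is also $\le 0$. I localize with stopping times $\tau_n$ that exit $x\in(1/n,n)$ and that make the stochastic integrals $\int\sigma\zeta X V_x\,dW$ and the compensated jump martingale $\int (V(\cdot,2)-V(\cdot,1))\,d(N-\lambda\,\cdot)$ true martingales. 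This gives $\E^{t,x,1}[V(\tau_n\wedge T,X_{\tau_n\wedge T},\epsilon_{\tau_n\wedge T})]\le \tv(t,x)$. Both $V(\cdot,\cdot,i)$ are nonnegative, so Fatou's lemma together with $X_{\tau_n\wedge T}\to X_T$ and continuity yields $\E[U(X_T,\epsilon_T)]\le \tv(t,x)$. This direction needs no integrability beyond nonnegativity.

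\textbf{Optimality of $\hat\zeta$.} The pre-event wealth under $\hat\zeta$ is $X^*$ from Proposition~\ref{prop:UI}, with $y=H(t,x)$ chosen so that $x=-w_y(t,y)$. After $\theta$ the process continues with the Merton ratio starting from $X^*_\theta$. Along this strategy both drifts vanish, because the supremum in \eqref{HJB v_1} is attained by the feedback in \eqref{hat zeta} and the one in \eqref{HJB v_2} by $\frac{\mu}{(1-p_2)\sigma^2}$. Hence, with the same localization, $\E[V(\tau_n\wedge T,\hat X_{\tau_n\wedge T},\epsilon_{\tau_n\wedge T})]=\tv(t,x)$. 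The remaining task is to pass to the limit $n\to\infty$, which needs uniform integrability of $\{V(\tau_n\wedge T,\hat X_{\tau_n\wedge T},\epsilon)\}_n$.

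\textbf{Uniform integrability (main obstacle).} On the pre-event branch, \eqref{estimate} gives $\tv(s,x)\le\widetilde C(1+x^{\eta/(\eta+1)})$ with exponent $\ell:=\eta/(\eta+1)\in(0,1)$. Proposition~\ref{prop:UI} then makes $\{(X^*_{\tau\wedge T})^\ell\}$ uniformly integrable over all stopping times $\tau$. On the post-event branch, $V(s,x,2)\le x^{p_2}/p_2$ with $p_2\in(0,1)$. The estimate \eqref{E[X^eta]<infty} applies to every admissible strategy, since its proof used only the bound $\mu z-\frac{(1-\ell)\sigma^2}{2}z^2\le\frac{\mu^2}{2\sigma^2(1-\ell)}$. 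Repeating that argument with exponent $\bar p\in(p_2,1)$ for the concatenated wealth $\hat X$ gives uniformly bounded $\bar p$-moments at all stopping times, and hence uniform integrability of $\hat X^{p_2}_{\tau\wedge T}$. Combining the two branches, with $\hat X=X^*$ before $\theta$, the family is uniformly integrable. Letting $n\to\infty$ then gives $\E[U(\hat X_T,\epsilon_T)]=\tv(t,x)$, so $v(t,x,1)=\tv(t,x)$ and $\hat\zeta$ is optimal.

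A secondary technical point is admissibility of $\hat\zeta$: the pre-event part lies in $\AA(t)$ by Proposition~\ref{prop:UI}, and the post-event part is constant. I also need the localizing sequence to satisfy $\tau_n\to\infty$ a.s., which follows because $X^*$ is positive and non-explosive, a consequence of Lemma~\ref{lem:Y^*} and $X^*=-w_y(\cdot,Y^*)$. The case $t=T$ is trivial by \eqref{bc for tv}.
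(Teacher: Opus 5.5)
Your proposal is correct and follows the paper's own verification argument: It\^o's formula for the Markov-modulated process with localization, Fatou's lemma (using $\tv\ge 0$) for $v(t,x,1)\le\tv(t,x)$, and uniform integrability along $\hat\zeta$ for the reverse inequality. The only deviation is in the post-event part of the uniform-integrability step. There you apply the strategy-independent moment bound \eqref{E[X^eta]<infty} directly to the concatenated wealth $X^{\hat\zeta}$, instead of the paper's factorization $X^{\hat\zeta}=X^*_\theta G$ with H\"older's inequality and running-maximum moments of $G$; this is a valid shortcut. You also correctly use the exponent $\eta/(\eta+1)\in(0,1)$ from \eqref{estimate}, which is what Proposition~\ref{prop:UI} requires, whereas the paper's \eqref{bar v<='} writes $\eta$.
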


\begin{proof}
For any $(t,x,i)\in[0,T]\times(0,\infty)\times\{{1},{2}\}$, define the function $\bar v(t,x,i)$ by $\bar v(t,x,{1}):=\tv(t,x)$ and $\bar v(t,x,{2}):=v(t,x,{2})=f(t)x^{p_2}/p_2$. Recall the continuous-time Markov chain $\epsilon_t$ given by \eqref{rate matrix}. At any initial time $t\in[0,T)$, take $\zeta\in\AA(t)$ and suppose that $X^\zeta_t =x$ and $\epsilon_t=1$. Consider $\tau_n:=\inf\{u\ge t : \sigma X^\zeta_u\bar v_x(u,X^\zeta_u,\epsilon_u)\zeta_u\ge n\}$ and recall $\theta$ in \eqref{theta}.
By applying It\^{o}'s formula for Markov-modulated processes (see e.g., \cite[Theorem 2.3]{EK22}) to $\bar v(t,X^\zeta_t,\epsilon_t)$, we have
\begin{align}
&\E[\bar v(T\wedge\tau_n, X^\zeta_{T\wedge\tau_n},\epsilon_{T\wedge\tau_n})]\notag\\
&= \tv(t,x) +\E\bigg[\int_t^{T\wedge\tau_n\wedge\theta} \tv_t(s,X_s) + \mu\zeta_sX^\zeta_s\tv_x(s,X^\zeta_s) +\frac12 \sigma^2 X_s^2 \tv_{xx}(s,X^\zeta_s)\zeta_s^2\, ds\bigg]\nonumber\\
&\hspace{0.3in}  +\E\left[\left(v(\theta,X^\zeta_\theta,{2}) - \tv(\theta,X^\zeta_\theta)\right) 1_{\{\theta\le T\wedge\tau_n\}}\right]\notag\\ 
&\hspace{0.3in} +\E\bigg[\int_{T\wedge\tau_n\wedge\theta}^{T\wedge\tau_n} v_t(s,X_s,{2}) + \mu\zeta_s X^\zeta_s v_x(s,X^\zeta_s,{2}) +\frac12 \sigma^2 X_s^2 v_{xx}(s,X^\zeta_s,{2})\zeta_s^2\, ds\bigg]\nonumber\\
& = \tv(t,x) +\E\bigg[\int_t^{T\wedge\tau_n} e^{-\lambda s} \bigg(\tv_t(s,X_s) + \mu\zeta_s X^\zeta_s \tv_x(s,X^\zeta_s) +\frac12 \sigma^2 X_s^2 \tv_{xx}(s,X^\zeta_s)\zeta_s^2 \notag\\
&\hspace{3.5in}+ \lambda \left(v(s,X^\zeta_s,{2}) - \tv(s,X^\zeta_s)\right) \bigg)\, ds\bigg]\notag\\
&\hspace{0.3in}+\E\bigg[\int_{T\wedge\tau_n\wedge\theta}^{T\wedge\tau_n} v_t(s,X_s,{2}) + \mu\zeta_s X^\zeta_s v_x(s,X^\zeta_s,{\bf 2}) +\frac12 \sigma^2 X_s^2 v_{xx}(s,X^\zeta_s,{2})\zeta_s^2\, ds\bigg]\nonumber\\
 &\le \tv(t,x),\label{Ito's}
\end{align}
where the second equality is due to \eqref{theta} and the inequality follows from \eqref{pde for tv} and \eqref{HJB v_2}. As $n\to\infty$, by Fatou's Lemma and \eqref{bc for tv}, we get $\E[U(X^\zeta_T,\epsilon_T)] = \E[\bar v(T, X^\zeta_T,\epsilon_T)]\le \tv(t,x)$. Then, taking supremum over all $\zeta\in\AA(t)$ yields $v(t,x,{1})\le \tv(t,x)$. 

On the other hand, by taking $\zeta=\hat\zeta$ in \eqref{optimal zeta}, as $-\frac{\mu \tv_x(s,x)}{\sigma^2 x \tv_{xx}(s,x)}$ attains the supremum in \eqref{pde for tv} and $\frac{\mu}{(1-p_2)\sigma^2}$ attains the supremum in \eqref{HJB v_2}, the inequality in \eqref{Ito's} becomes an equality, i.e.,
\begin{equation}\label{final for veri}
\E[\bar v(T\wedge\tau_n, X^{\hat \zeta}_{T\wedge\tau_n},\epsilon_{T\wedge\tau_n})]= \tv(t,x). 
\end{equation}
Note that by the definition of $\bar v(t,x,i)$, 
\begin{align}\label{bar v<=}
\bar v(T\wedge\tau_n, X^{\hat \zeta}_{T\wedge\tau_n},\epsilon_{T\wedge\tau_n}) &= \tv(T\wedge\tau_n, X^{*}_{T\wedge\tau_n}) 1_{\{\theta> T\wedge\tau_n\}} + f(T\wedge\tau_n)\frac{(X^{\hat \zeta}_{T\wedge\tau_n})^{p_2}}{p_2} 1_{\{\theta\le  T\wedge\tau_n\}}.
\end{align}
Observe that if $\theta\le  T\wedge\tau_n$, we have $X^{\hat \zeta}_{T\wedge\tau_n} = X^{*}_{\theta} G_{T\wedge\tau_n}$, where $G$ is a geometric Brownian motion given by
\[
dG_s = G_s \frac{\mu}{(1-p_2)\sigma^2} (\mu ds+\sigma dW_s),\quad s\in[\theta,T],\quad G_\theta=1.
\]
Thanks to this and the upper bound in \eqref{estimate}, we deduce from \eqref{bar v<=} that
\begin{equation}\label{bar v<='}
\bar v(T\wedge\tau_n, X^{\hat \zeta}_{T\wedge\tau_n},\epsilon_{T\wedge\tau_n}) \le \widetilde C(1+(X^*_{T\wedge\tau_n})^\eta)+ \frac{(X^{*}_{\theta})^{p_2} G^{p_2}_{T\wedge\tau_n}}{p_2}.
\end{equation}
By Proposition~\ref{prop:UI}, the first term on the right-hand side $\{\widetilde C(1+(X^*_{T\wedge\tau_n})^\eta)\}_{n\in\N}$ is uniformly integrable. As $p_2\in (0,1)$, we can take $b,b'>1$ such that $\bar{p_2}:= p_2 b\in (p_2,1)$ and $\bar{p_2}' := \bar{p_2} b'\in (\bar p_2,1)$. Consider $k,k'>1$ such that $\frac1b+\frac1k=1$ and $\frac{1}{b'}+\frac{1}{k'}=1$. Then, 
\[
\E\left[ \left((X^{*}_{\theta})^{p_2} G^{p_2}_{T\wedge\tau_n}\right)^b\right] \le \E\left[ (X^{*}_{\theta})^{\bar{p_2}} \max_{s\in[\theta,T]} G^{\bar p_2}_s\right] \le \left(\E\left[(X^{*}_{\theta})^{\bar{p_2}'}\right]\right)^{1/b'} \left(\E\left[\max_{s\in[\theta,T]} G^{\bar p_2 k'}_s\right]\right)^{1/k'}<\infty,
\]
where the second inequality follows from H\"{o}lder's inequality and the finiteness stems from \eqref{E[X^eta]<infty} and the fact that $G^{\bar p_2 k'}$ remains a geometric Brownian motion. This then implies that $\{(X^{*}_{\theta})^{p_2} G^{p_2}_{T\wedge\tau_n}\}_{n\in\N}$ is uniformly integrable. We thus conclude from \eqref{bar v<='} that $\{\bar v(T\wedge\tau_n, X^{\hat \zeta}_{T\wedge\tau_n},\epsilon_{T\wedge\tau_n})\}_{n\in\N}$ is uniformly integrable. Hence, as $n\to\infty$ in \eqref{final for veri}, we obtain $\E[\bar v(T,X^{\hat\zeta}_T, \epsilon_{T})] = \tv(t,x)$, i.e., $\E[ U(X^{\hat\zeta}_T, \epsilon_{T})] = \tv(t,x)$, which implies $v(t,x,1)\ge \tv(t,x)$. We therefore conclude $v(t,x,1) = \tv(t,x)$ and $\hat\zeta$ is an associated optimal control.  
\end{proof}

\appendix
\section{Construction of a Solution to \eqref{HJB3}}\label{sec:construct w}

This appendix is devoted to proving Theorem \ref{th.for.w}. The eventual proof is relegated to Section~\ref{subsec:proof of Thm tw}, which builds upon the developments in Sections~\ref{subsec:A.1}, \ref{subsec:A.2}, and \ref{subsec:A.3}.  

First, we perform the following change of variables
$$u(t,z)=\exp{( \beta t)} w(T-t, e^z), \ z\in \rr,$$
where
$$ \alpha := \frac{1}{2}\left[\frac{\lambda}{\xi}-1 \right]\in \Big(-\frac 12,\infty\Big)\quad\hbox{and}\quad \beta := \lambda+ \xi  \alpha^2 \geq 0. $$
By the chain rule, 
\[
u_t =\exp{( \beta t)} (\beta w- w_t),\quad u_z =\exp{( \beta t)} (\alpha w + y w_y),\quad u_z =\exp{( \beta t)} (\alpha w + y w_y). 
\]
By solving for $w_y$, one gets
\begin{equation}\label{eu}
w_y=\exp{(- \beta t)}u_z . 
\end{equation}
Therefore, the equation \eqref{HJB3} becomes
\begin{equation}\label{sys.v}
 \left\{
 \begin{array}{ll}
 u_t - \xi (\alpha^2u +2\alpha u_z+ u_{zz})=\exp(-\xi t)\exp((1-p_2\beta) t)\exp(-p_2z) (-u_z (t,z) )^{p_2},& z\in \rr,\\[10pt]
 	u(0,z)=-\frac{1}{q_{1}}\exp( q_1  z). &
 \end{array}
\right.
\end{equation}
The objective of this section is to prove the local/global well-posedness of solutions of this equation.

\subsection{Heat equation on weighted Sobolev spaces}\label{subsec:A.1}
For any $a>0$ and $1\leq p\leq \infty$, we consider the space
\[
  W^{1,p}_a(\rr)=\{ u:\rr\rightarrow \rr, \ s.t. \ e^{-a|x|}u, e^{-a|x|}u_x\in L^p(\rr)\}
\]
endowed with the norm
\[
  \|u\|_{W^{1,p}_a(\rr)}=\| e^{-a|x|}u\|_{L^p(\rr)}+\| e^{-a|x|}u_x\|_{L^p(\rr)}.
\]
We present some results for the action of the heat flow $\exp(-t\mathcal{L})$ associated with the operator 
\[\mathcal L u= \xi (\alpha^2u +2\alpha u_z+ u_{zz})\]
 on the weighted Sobolev spaces introduced above.

\begin{lemma}\label{Lp}
	There exists a positive constant $C$ such that for any $1\leq p\leq \infty$ the following
	\begin{equation}
\label{est.lp}
  \|e^{t\mathcal L} \varphi\|_{\lp}\leq Ce^{t\xi(a+|\alpha|)^2}\|\varphi\|_{\lp}, \forall\ t>0,
\end{equation}
holds for all $a>0$  and $\varphi\in \lp$.
	 \end{lemma}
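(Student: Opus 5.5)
Here $\lp$ is the weighted space $\{u:\ e^{-a|x|}u\in L^p(\rr)\}$ with norm $\|e^{-a|x|}u\|_{L^p(\rr)}$. The plan is to write the semigroup $e^{t\mathcal L}$ explicitly and then bound it by a Gaussian convolution combined with a multiplicative weight estimate.

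Since $\mathcal L u=\xi(u_{zz}+2\alpha u_z+\alpha^2 u)$ has constant coefficients, we have
\[
e^{t\mathcal L}\varphi(z)=e^{\xi\alpha^2 t}\,(G_{\xi t}*\varphi)(z+2\xi\alpha t),
\]
where $G_s(z)=(4\pi s)^{-1/2}e^{-z^2/(4s)}$ is the heat kernel of $\partial_s-\partial_{zz}$. Equivalently, with the shift $b:=2\xi\alpha t$,
\[
e^{t\mathcal L}\varphi(z)=e^{\xi\alpha^2t}\int_\rr G_{\xi t}(z+b-y)\varphi(y)\,dy .
\]

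Next we put the weight inside the integral. Write $\varphi(y)=e^{a|y|}\psi(y)$ with $\psi=e^{-a|\cdot|}\varphi\in L^p$. The triangle inequality $|y|\le |z|+|z+b-y|+|b|$ gives
\[
e^{-a|z|}\,|e^{t\mathcal L}\varphi(z)|\le e^{\xi\alpha^2 t}e^{a|b|}\int_\rr G_{\xi t}(z+b-y)e^{a|z+b-y|}|\psi(y)|\,dy .
\]
The right-hand side is a convolution of $|\psi|$ with the kernel $K(x)=G_{\xi t}(x)e^{a|x|}$, evaluated at the shifted point $z+b$. Translation preserves $L^p$ norms, so Young's inequality yields
\[
\|e^{t\mathcal L}\varphi\|_{\lp}\le e^{\xi\alpha^2t}e^{2a\xi|\alpha|t}\,\|K\|_{L^1}\,\|\varphi\|_{\lp},
\]
uniformly in $1\le p\le\infty$.

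The $L^1$ norm of the kernel is
\[
\|K\|_{L^1}\le 2\int_\rr G_{\xi t}(x)e^{ax}\,dx=2e^{a^2\xi t},
\]
using the Gaussian moment generating function $\int G_s e^{ax}=e^{a^2 s}$. Collecting exponents gives $\xi t(\alpha^2+2a|\alpha|+a^2)=\xi t(a+|\alpha|)^2$, so the estimate holds with $C=2$.

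The only delicate point is tracking the constants. The drift shift, the zeroth-order term and the weight growth must combine exactly into $(a+|\alpha|)^2$, and the constant must be independent of $a$, $p$ and $t$. The triangle-inequality splitting above is designed to make that bookkeeping exact. The remaining verifications are to check the explicit formula for $e^{t\mathcal L}$ (by direct differentiation, or by noting that the operator is conjugate to the heat operator after a translation) and to justify Young's inequality for $p=\infty$, which is immediate.
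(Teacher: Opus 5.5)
Your proof is correct and follows essentially the same route as the paper: write $e^{t\mathcal L}$ explicitly, use the triangle inequality to dominate the weighted operator by a convolution with an integrable Gaussian-times-exponential kernel, and bound that kernel's $L^1$ norm by $2e^{\xi t(a+|\alpha|)^2}$. The differences are cosmetic. You encode the drift as a translation by $2\xi\alpha t$ rather than as the tilt $e^{-\alpha(x-y)}$, but these give the same kernel, since $g_t(x)e^{-\alpha x}=e^{\alpha^2 t}g_t(x+2\alpha t)$ when $\xi=1$. You also invoke Young's inequality for all $p$ at once, where the paper treats $p=1,\infty$ and interpolates.
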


\begin{proof}For simplicity let us assume that $\xi=1$. We denote by $g_t$ the one dimensional heat kernel $g_t(x)=(4\pi t)^{-1/2}\exp(-x^2/4t)$.
Solving explicitly the equation $u_t=\mathcal L u$ with initial data $u(0,z)=\varphi(z)$ we find that
\[
(e^{t\mathcal L} \varphi)(x)=(g_t(z)e^{-\alpha z}) \ast \varphi=\int_{\rr}g_t(x-y)e^{-\alpha(x-y)}\varphi(y)dt. 
\] 
	We consider the cases $p=1$ and $p=\infty$, the other cases being obtained by interpolation. Observe that we have
	\[
  (e^{t\mathcal L} \varphi)(x)=\int_{\rr} g_t(x-y)e^{-\alpha(x-y)}\varphi(y)dy=\int _{\rr} g_t(x-y)e^{a|y|}e^{-\alpha(x-y)}e^{-a|y|}\varphi(y)dy. 
\]
Thus
\[
  e^{-a|x|}|(e^{t\Delta}\varphi)(x)|\leq \| e^{-a|y|} \varphi(y)\|_{L^\infty(\rr)}  \int _{\rr} g_t(x-y)e^{-\alpha(x-y)}e^{-a|x|+a|y|}dy.
\]
Observe that for any $x\in \rr$ 
\begin{align*}
\label{}
   \int _{\rr} g_t(x-y)&e^{-\alpha(x-y)}e^{-a|x|+a|y|}dy\leq \int _{\rr} g_t(x-y)e^{(a+|\alpha|)|x-y|}dy\\
   &= \int _{\rr} g_t(z)e^{(a+|\alpha|)|z|}dz=\frac 2{\sqrt{4\pi t}}\int _0^\infty e^{-\frac {z^2}{4t}}e^{(a+|\alpha|)z}dz\\
   &=\frac 2{\sqrt{\pi }}\int _0^\infty e^{- {z^2}}e^{2(a+|\alpha|)z\sqrt t}dz=\frac {2e^{(a+|\alpha|)^2t}}{\sqrt{\pi }}\int _0^\infty e^{- (z-(a+|\alpha|)\sqrt t)^2}dz \\
  &\leq    Ce^{(a+|\alpha|)^2t} .
\end{align*}
This proves the case $p=\infty$. When $p=1$ we have
\[
  \int _\rr e^{-a|x|}| (e^{t\mathcal L}\varphi)(x)|dx\leq \| e^{-a|y|} \varphi(y)\|_{L^1(\rr)}\sup_{y}  \int _{\rr} g_t(x-y)e^{-\alpha(x-y)}e^{-a|x|+a|y|}dx.
\]
We obtain estimate \eqref{est.lp} with the same constant $C$ as in the first case. 
\end{proof}

\begin{lemma}\label{Wp}
	For any $a>0$ there exists a positive constant $C=C(a)$ such that   for any $1\leq p\leq \infty$  the following
	\begin{equation}
\label{est.grad.lp}
  \|\partial_x( e^{t\mathcal L} \varphi)\|_{\lp}\leq C e^{t\xi(a+|\alpha|)^2} \Big(1+\frac {1}{\sqrt t}\Big)\|\varphi\|_{\lp}, \forall\ t>0,
\end{equation}
holds for all $\varphi\in \lp$.
	 \end{lemma}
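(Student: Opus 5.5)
We follow the proof of Lemma~\ref{Lp} and use the explicit kernel representation of the semigroup. Take $\xi=1$; the general case follows by rescaling time. By that proof,
\[
(e^{t\mathcal L}\varphi)(x)=\int_\rr K_t(x-y)\varphi(y)\,dy,\qquad K_t(z):=g_t(z)e^{-\alpha z}
\]
(up to the harmless constant factor $e^{\alpha^2 t}$ that comes from the zeroth-order term and is absorbed into the exponential). Differentiating under the integral, which is justified by the Gaussian decay of $g_t$ against the exponential growth of $\varphi$ allowed in $\lp$, gives
\[
\partial_x(e^{t\mathcal L}\varphi)(x)=\int_\rr \partial_xK_t(x-y)\varphi(y)\,dy,\qquad
\partial_z K_t(z)=\Big(-\frac{z}{2t}-\alpha\Big)g_t(z)e^{-\alpha z}.
\]
Hence $|\partial_zK_t(z)|\le \big(\tfrac{|z|}{2t}+|\alpha|\big)g_t(z)e^{|\alpha||z|}$.

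As in Lemma~\ref{Lp}, we treat the endpoint cases $p=\infty$ and $p=1$ and obtain $1<p<\infty$ by interpolation, or directly by Schur's test/Young's inequality for the weighted kernel. Inserting the weights and using $-a|x|+a|y|\le a|x-y|$, both endpoint cases reduce to bounding the single quantity
\[
I(t):=\int_\rr\Big(\frac{|z|}{2t}+|\alpha|\Big)g_t(z)e^{(a+|\alpha|)|z|}\,dz .
\]
The $|\alpha|$ part of $I(t)$ is exactly the integral already bounded in Lemma~\ref{Lp}, so it contributes at most $C|\alpha|e^{(a+|\alpha|)^2t}$.

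For the other part, set $b:=a+|\alpha|$ and substitute $z=2\sqrt t\,s$. This gives
\[
\frac{1}{2t}\int_\rr |z|g_t(z)e^{b|z|}\,dz=\frac{2}{\sqrt{\pi t}}\int_0^\infty s\,e^{-s^2+2b\sqrt t s}\,ds
=\frac{2e^{b^2t}}{\sqrt{\pi t}}\int_0^\infty s\,e^{-(s-b\sqrt t)^2}\,ds .
\]
Writing $s=(s-b\sqrt t)+b\sqrt t$, the last integral is at most $C(1+b\sqrt t)$. The term is therefore bounded by
\[
Ce^{b^2t}\Big(\frac1{\sqrt t}+b\Big)\le C(a)e^{b^2t}\Big(1+\frac1{\sqrt t}\Big).
\]
Adding the two contributions yields \eqref{est.grad.lp}, with a constant depending on $a$ (and on $\alpha$, $\xi$).

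The only real obstacle is this computation of the first moment of the Gaussian against the exponential weight. We must keep the growth at $e^{(a+|\alpha|)^2 t}$, with no extra polynomial loss beyond what is absorbed into $C(a)(1+1/\sqrt t)$, while also extracting the singular factor $1/\sqrt t$. The completing-the-square step above handles both, since it shows the first moment grows only linearly in $\sqrt t$. The same constant serves $p=1$, because $\sup_y\int_\rr|\partial_xK_t(x-y)|e^{-a|x|+a|y|}\,dx\le I(t)$ by the same pointwise bound.
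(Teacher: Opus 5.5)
Your proof is correct and follows essentially the paper's route: the kernel representation from Lemma~\ref{Lp}, the inequality $-a|x|+a|y|\le a|x-y|$ reducing both endpoint cases to a single kernel integral, and completing the square to control the first Gaussian moment against $e^{(a+|\alpha|)|z|}$. You are in fact more careful than the paper's write-up, which bounds only $|\partial_x g_t|$ and drops the $-\alpha g_t(z)e^{-\alpha z}$ term that you keep. One small correction: there is no extra factor $e^{\alpha^2 t}$, since the substitution $u=e^{-\alpha z}v$ turns $u_t=\mathcal L u$ into $v_t=\xi v_{zz}$ and the kernel is exactly $g_t(z)e^{-\alpha z}$ when $\xi=1$. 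This matters because such a factor could not be absorbed into $Ce^{t\xi(a+|\alpha|)^2}$ uniformly in $t>0$.
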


\begin{proof}
	Proceeding as in Lemma \ref{Lp} we have
	\[
  e^{-a|x|}|\partial_x(e^{t\mathcal L} \varphi)(x)|\leq  \| e^{-a|y|} \varphi(y)\|_{L^\infty(\rr)} \sup_{x}\int _{\rr} |\partial_x g_t(x-y)| e^{(a+|\alpha|)|x-y|}dy.
\]
Explicit computation show that
\begin{align*}
 e^{b^2t} \int _{\rr} |\partial_x g(z)|e^{b|z|}dz&=\frac 1{2t}\int _{\rr} g_t(z)|z|e^{b|z|}dz=\frac 1{\sqrt {\pi t}}\int_{\rr}|y|e^{-(y+b\sqrt t)^2}dy=\\
  &=\frac 1{\sqrt {\pi }}  \int_{\rr}\Big|\frac y{\sqrt t}-b\Big|e^{-y ^2}dy\leq C_b \Big(1+\frac {1}{\sqrt t}\Big).  
\end{align*} 
In a similar manner 
\[
  \int_{\rr} e^{-a|x|}|\partial_x(e^{t\mathcal L} \varphi)(x)|dx\leq \|e^{-a|x|} \varphi\|_{L^1(\rr)}   \int _{\rr} |(\partial_x g_t)(z)|e^{(a+|\alpha|)|z|}dz\leq  C_{a}{e^{(a+|\alpha|)^2t}}\Big(1+\frac 1{\sqrt t}\Big).
\]
and proof is finished.
\end{proof}

\subsection{Local well-posedness}\label{subsec:A.2}
We now consider the  system \eqref{sys.v}. We write it as 

\begin{equation}\label{sys1}
 \left\{
 \begin{array}{ll}
 u_t - \mathcal{L}u=h(t) \exp(-p_2z) |u_z (t,z) |^{p_2},& z\in \rr,\\[10pt]
 	u(0,z)=-\frac{1}{q_{1}}\exp( q_1  z). &
 \end{array}
\right.
\end{equation}
where $h(t)=\exp(-\xi t)\exp((1-p_2\beta) t)\in C([0,\infty))$.

\begin{theorem}
	\label{th.for.u} 	There exists a  solution of system \eqref{sys1} satisfying 
	\begin{equation}
\label{reg.u}
  u\in C([0,\infty)),\wi)
\end{equation}
for any $a\geq|q_1|.$ Moreover, it satisfies  
\begin{equation}
\label{prop.u}
  u(t,z)\geq -\frac{1}{q_1}\exp(t\xi(\alpha+q_1)^2)\exp(q_1z)>0, \, \forall \ t>0,\ \forall\, z\in \rr.
  \end{equation}
and
\begin{equation}\label{monotone.u}
  \partial_z u (t,z)\le 0.
\end{equation}
\end{theorem}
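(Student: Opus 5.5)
We plan to follow the strategy announced in Section~\ref{sec:dual}: regularize, solve the regularized problem by a fixed point, continue it to all times, and pass to the limit by compactness.

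\textbf{Regularization.} Since $s\mapsto |s|^{p_2}$ is not Lipschitz at $0$, we replace it by a smooth Lipschitz function $F_\eps(s)$, for instance $F_\eps(s)=(s^2+\eps^2)^{p_2/2}-\eps^{p_2}$. This $F_\eps$ is nonnegative, globally Lipschitz with constant of order $\eps^{p_2-1}$, and bounded by $|s|^{p_2}$. The weight $e^{-p_2 z}$ is unbounded as $z\to-\infty$, so we also truncate it to $\min(e^{-p_2z},\eps^{-1})$. The regularized problem is then written in Duhamel form,
\begin{equation*}
u(t)=e^{t\mathcal L}u_0+\int_0^t e^{(t-s)\mathcal L}\big[h(s)\,\rho_\eps(z)\,F_\eps(\partial_z u(s))\big]\,ds,
\end{equation*}
where $u_0(z)=-\frac1{q_1}e^{q_1 z}$ and $\rho_\eps$ is the truncated weight. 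We have $u_0\in W^{1,\infty}_a$ for $a\ge |q_1|$.

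\textbf{Local existence and continuation.} Lemmas~\ref{Lp} and \ref{Wp} give
\begin{equation*}
\|\Phi u-\Phi v\|_{W^{1,\infty}_a}\le C_\eps\int_0^t e^{(t-s)\xi(a+|\alpha|)^2}\Big(1+(t-s)^{-1/2}\Big)\,\|u-v\|_{W^{1,\infty}_a}\,ds,
\end{equation*}
where $\Phi$ is the Duhamel map above. On $C([0,T_0];W^{1,\infty}_a)$ with $T_0$ small, $\Phi$ is a contraction, which yields a local solution. The same estimate is linear in the solution, because $F_\eps$ has linear growth. A Gronwall inequality with the integrable kernel $(t-s)^{-1/2}$ then rules out blow-up, and the solution extends to every $T$.

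\textbf{Properties of the regularized solution.} Three properties follow from the comparison principle.
\begin{itemize}
\item[(a)] Positivity and the lower bound \eqref{prop.u}. The source term is nonnegative, so $u^\eps\ge e^{t\mathcal L}u_0$. Since $u_0$ is an exponential, this can be computed explicitly: $\mathcal L e^{q_1z}=\xi(\alpha+q_1)^2e^{q_1z}$, so $e^{t\mathcal L}u_0=-\frac1{q_1}e^{t\xi(\alpha+q_1)^2}e^{q_1z}$.
\item[(b)] Monotonicity \eqref{monotone.u}. We differentiate the equation and set $v=\partial_z u^\eps$. Then $v$ solves a linear parabolic equation with zeroth-order term $h\,\rho_\eps' F_\eps(v)$ and first-order term $h\,\rho_\eps F_\eps'(v)\,v_z$. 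We have $v(0)=-e^{q_1z}\le0$. Since $F_\eps(0)=0$, we can write $F_\eps(v)=c(t,z)\,v$ with $c$ bounded. The maximum principle in the weighted class then gives $v\le0$.
\item[(c)] Uniform bounds. We need estimates independent of $\eps$, including an upper bound of the type $u\le C(1+e^{-\eta z})$, which corresponds to \eqref{todo}. We would try to obtain these by constructing explicit supersolutions of the form $A(t)e^{q_1 z}+B(t)e^{q_2 z}$ or similar. This is motivated by the Bernoulli ansatz discussed at the start of Section~\ref{sec:dual}: it handles the $e^{-p_2z}|u_z|^{p_2}$ term exactly, since $p_2(q_2-1)=q_2$.
\end{itemize}

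\textbf{Passage to the limit (main obstacle).} We expect the removal of the regularization to be the hardest step. The difficulty is that the Lipschitz constants blow up as $\eps\to0$, so the contraction estimates do not pass to the limit. Instead we would rely on two ingredients. First, we need $\eps$-uniform bounds on $u^\eps$ and $\partial_z u^\eps$ in $L^\infty_{loc}$, using (c) together with Bernstein-type gradient bounds on compact sets. Second, we need a bound on $\partial_t u^\eps$ in a negative Sobolev space, which follows from the equation. The Aubin--Lions theorem on each $[0,T]\times[-R,R]$ then gives strong convergence of $u^\eps$ and $\partial_z u^\eps$ along a subsequence. This allows us to pass to the limit in the nonlinear term and in the Duhamel formula, using the $\eps$-independent sublinear growth $|s|^{p_2}$. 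Properties \eqref{prop.u} and \eqref{monotone.u} are preserved by pointwise limits. Finally, membership in $C([0,\infty);W^{1,\infty}_a)$ follows from the limiting Duhamel representation together with Lemmas~\ref{Lp} and \ref{Wp}.
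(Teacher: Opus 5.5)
Your overall architecture (regularize, contract, continue, compactify) is the paper's, and your arguments at the regularized level are fine. Your $F_\eps\ge 0$ with $F_\eps(0)=0$ and positivity of the kernel give $u^\eps\ge e^{t\mathcal L}u_0=-\frac1{q_1}e^{t\xi(\alpha+q_1)^2}e^{q_1z}$, and the maximum principle for $v=\partial_z u^\eps$ gives $v\le 0$. The gap is in the passage to the limit. You leave it as a plan, and as stated it does not work. Uniform $L^\infty_{loc}$ bounds on $u^\eps$ and $\partial_z u^\eps$, plus a negative-Sobolev bound on $\partial_t u^\eps$, make $\{u^\eps\}$ precompact in $L^2_{loc}$. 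They say nothing about strong compactness of $\{\partial_z u^\eps\}$, which is only weak-$*$ precompact. Since $s\mapsto(-s)^{p_2}$ is concave on $s\le 0$, a weak-$*$ limit of $(-\partial_z u^\eps)^{p_2}$ is only $\le(-\partial_z u)^{p_2}$, so the limit would merely be a subsolution. To apply Aubin--Lions to $\partial_z u^\eps$ you need $\eps$-uniform spatial regularity of $\partial_z u^\eps$. For instance, you need $\|\partial_{zz}u^\eps\|_{L^2((0,T),L^2(I_R))}\le C(T,R)$, together with $\partial_{tz}u^\eps$ bounded in $L^2((0,T),H^{-1}(I_R))$. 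This is the paper's Lemma~\ref{unif.H2}. It is proved by testing \eqref{sys.v.reg} against $(\eta^2u_z)_z$ for a cutoff $\eta$, absorbing the source term by its at-most-linear growth in $u_z$ on $I_{2R}$, and closing with a uniform local $H^1$ bound.

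That local $H^1$ bound in turn needs the $\eps$-uniform estimates you only sketch in (c). Your supersolution $A(t)e^{q_1z}+B(t)e^{q_2z}$ can be made to work for $u^\eps$ itself, because $q_1<p_2(q_1-1)<q_2$ and $p_2(q_2-1)=q_2$. However, it does not control $\partial_z u^\eps$, and the Bernstein step is not carried out. Purely local bounds would also not let you pass to the limit in the Duhamel formula on all of $\R$, nor give the weighted $W^{1,\infty}_a$ control in the statement. What you missed is that your own Duhamel estimate already gives a global $\eps$-uniform bound, once you use sublinearity instead of the Lipschitz constant. With $\rho_\eps\le e^{-p_2z}$ and $F_\eps(s)\le|s|^{p_2}$,
\[
e^{-a|z|}\rho_\eps(z)\,|\partial_z u(t,z)|^{p_2}\le e^{-a(1-p_2)|z|-p_2z}\,\|u(t)\|_{W^{1,\infty}_a}^{p_2}.
\]
The right-hand side is bounded in $z$ as soon as $a(1-p_2)\ge p_2$, i.e.\ $a\ge|q_2|$, which holds automatically for $a\ge|q_1|$ since $p_1>p_2$. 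Lemmas~\ref{Lp} and \ref{Wp} and a sublinear Gronwall argument then bound $\sup_{t\le T}\|u^\eps(t)\|_{W^{1,\infty}_a}$ independently of $\eps$. This supplies the local $H^1$ bound, dominates the Duhamel integrand in the limit, and transfers the weighted bound to $u$.
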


 In order to analyze the solutions of this system we consider a  perturbed system.
  For any $\eps\in (0,1)$ and $M>0$ we consider 

 \begin{equation}\label{sys.v.reg}
 \left\{
 \begin{array}{ll}
 u_t -   \mathcal{L}u=h(t)g_M(z) F_\eps(u_z),& z\in \rr,\ t>0,\\[10pt]
 	u(0,z)=u_0(z)=-\frac{1}{q_{1}}\exp{  (q_1   z)},& z\in \rr,
 \end{array}
\right.
\end{equation}
where \[
  F_\varepsilon(z)=(\varepsilon^2+z^2 )^{p_2/2}.
\]
and $g_M\in C^1(\rr)$ is a  bounded and decreasing approximation  of $\exp(-p_2z)$ (take for example a $C^1$ molifier of $\tilde g_M(z)=\min\{M,\exp(-p_2z)\}$)
\[
  |g_M(z)|\leq \tilde g_M(z), \ z\in \rr,
\]
satisfying $g_M(z)\rightarrow \exp(-p_2z)$ uniformly on compact sets.

\begin{theorem}\label{global.existence}
	Let $a\geq  |q_1| $. 
	
	i) For any $\eps, M>0$, there exists a unique global solution $u_{\epsilon,M}\in C([0,\infty)),\wi)$ to \eqref{sys.v.reg}. 
	
	ii) The solution satisfies  
		\begin{equation}
\label{est.sol}
  \|u_{\epsilon,M}(T)\|_{\wi}\leq C(T, \|h\|_{L^\infty((0,T))}, \|u_0\|_{\wi}), \quad \forall\  T>0,
\end{equation}
uniformly in $\eps\in (0,1)$ and $M>0$.

 Moreover,   the solution satisfies 
 \begin{equation}
\label{prop.u.eps}
  u_{\epsilon,M}\geq -\frac{1}{q_1}\exp(t\xi(\alpha+q_1)^2)\exp(q_1z)>0, \, \forall \ t>0,\ \forall\, z\in \rr
  \end{equation}
and
\begin{equation}\label{monotone}
  \partial_z u_{\epsilon,M}\le 0.
\end{equation}
\end{theorem}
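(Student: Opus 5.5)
The plan is to write \eqref{sys.v.reg} in mild (Duhamel) form,
\[
u(t)=e^{t\mathcal L}u_0+\int_0^t e^{(t-s)\mathcal L}\big[h(s)g_M F_\eps(u_z(s))\big]\,ds=: \Phi(u)(t),
\]
and to solve it by a fixed point of $\Phi$ on $X_{T_0}:=C([0,T_0],\wi)$. First, $u_0=-e^{q_1 z}/q_1\in\wi$ because $a\ge|q_1|$. By Lemma~\ref{Lp} and Lemma~\ref{Wp} with $p=\infty$, the map $t\mapsto e^{t\mathcal L}u_0$ is bounded in $\wi$. I will also verify continuity at $t=0$, using that $\partial_z e^{t\mathcal L}u_0=e^{t\mathcal L}\partial_z u_0$ and smoothness of $u_0$. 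For the nonlinearity, $g_M$ is bounded by $M$, and $F_\eps$ is globally Lipschitz with constant $\le p_2\eps^{p_2-1}$, with $|F_\eps(v)|\le \eps^{p_2}+|v|^{p_2}\le 1+\eps^{p_2}+|v|$. Hence
\[
\|e^{-a|z|}g_M(F_\eps(v)-F_\eps(w))\|_{L^\infty}\le M p_2\eps^{p_2-1}\|v-w\|_{\wi}.
\]
Applying Lemma~\ref{Lp} and Lemma~\ref{Wp} inside the integral, where $(1+(t-s)^{-1/2})$ is integrable, gives
\[
\|\Phi(u)-\Phi(v)\|_{X_{T_0}}\le C_{M,\eps}\,\|h\|_\infty\,(T_0+\sqrt{T_0})\,e^{T_0\xi(a+|\alpha|)^2}\|u-v\|_{X_{T_0}}.
\]
So $\Phi$ is a contraction for small $T_0$, and $\Phi$ maps $X_{T_0}$ into itself. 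This gives local existence and uniqueness. Because $F_\eps$ and $g_M$ are globally Lipschitz and bounded respectively, the affordable step size $T_0$ depends only on $(\eps,M)$ and not on the data. Iterating on $[T_0,2T_0],\dots$ therefore yields a unique global solution, which proves part (i).

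For the uniform bound \eqref{est.sol}, I cannot use the Lipschitz constant, since it blows up as $\eps\to0$. I will instead use the sublinear growth $F_\eps(v)\le 1+|v|$ together with $g_M(z)\le e^{-p_2 z}$. The difficulty is the weight: $e^{-p_2z}$ is unbounded as $z\to-\infty$. The plan is to first establish monotonicity \eqref{monotone} and the lower bound \eqref{prop.u.eps}, and then to use a comparison or maximum-principle argument rather than a crude $L^\infty$ Gronwall estimate. The two properties are proved as follows.

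\emph{Monotonicity \eqref{monotone}.} Differentiate the equation: $v=\partial_z u$ solves the linear parabolic equation
\[
v_t-\mathcal Lv=h g_M' F_\eps(u_z)+hg_M F_\eps'(u_z)v_z.
\]
Here $g_M'\le0$, $F_\eps\ge0$, and $v(0)=-e^{q_1z}\le0$. The weak maximum principle in the weighted class (growth at most $e^{a|z|}$), applied to $v$ viewed as a subsolution, gives $v\le 0$.

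\emph{Lower bound \eqref{prop.u.eps}.} The right-hand side of \eqref{sys.v.reg} is $\ge0$, so $u\ge e^{t\mathcal L}u_0$. For $u_0=-e^{q_1 z}/q_1$ this is explicit: $e^{t\mathcal L}u_0=-\frac1{q_1}e^{t\xi(\alpha+q_1)^2}e^{q_1 z}$, since $\mathcal L e^{q_1 z}=\xi(\alpha+q_1)^2e^{q_1 z}$.

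With these in hand, I return to the upper bound. The term $g_M|u_z|^{p_2}\le e^{-p_2 z}|u_z|^{p_2}$ is the hardest part, and I expect this to be the main obstacle. My first attempt is Young's inequality:
\[
e^{-p_2z}|u_z|^{p_2}\le p_2|u_z|+(1-p_2)e^{-\frac{p_2}{1-p_2}z}.
\]
The second term lies in $L^\infty_a$ only if $a\ge p_2/(1-p_2)=-q_2$. If that fails, I will instead construct an explicit supersolution of the form $A(t)(e^{q_1 z}+e^{q_2 z}+1)$, using the Bernoulli-type structure from Section~\ref{sec:dual} to absorb $e^{-p_2z}|\partial_z(\cdot)|^{p_2}$. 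This supersolution bounds $u$ above, and hence bounds $u$ in $L^\infty_a$ uniformly in $(\eps,M)$.

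The derivative bound then follows by applying Lemma~\ref{Wp} in Duhamel form to the linear-in-$|u_z|$ estimate. A Gronwall inequality with the singular kernel $(t-s)^{-1/2}$ controls $\|u_z\|_{L^\infty_a}$ by a constant depending only on $T$, $\|h\|_{L^\infty(0,T)}$ and $\|u_0\|_{\wi}$. This completes \eqref{est.sol}.
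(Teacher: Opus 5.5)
Your proposal is correct and is essentially the paper's argument. It has the same three ingredients: a Duhamel fixed point in $C([0,T],W^{1,\infty}_a(\mathbb{R}))$ built on Lemmas~\ref{Lp} and~\ref{Wp}; a uniform bound from a singular Gronwall inequality that needs $a\ge |q_2|$ (your Young inequality does the same job as the paper's use of $\sup_z e^{-a|z|(1-p_2)-p_2z}<\infty$); and the maximum principle applied to the differentiated equation for $u_z$. Your variations are slightly more direct than the paper's: you get global existence from the global Lipschitz bound on $F_\eps$ rather than a no-blow-up estimate, and you read the lower bound off from positivity of the Duhamel term rather than a comparison argument.

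One point you missed: $a\ge|q_1|>|q_2|$ holds automatically since $p_1>p_2$. So the Young step always applies, the supersolution fallback is never needed, and you do not need to prove \eqref{monotone} and \eqref{prop.u.eps} before \eqref{est.sol}.
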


\begin{proof}
	 Let us consider for simplicity the case $\xi=1$.
    Observe that $a\geq |q_1|$ guarantees that $u_0$ belongs to $\wi$.
	We consider the integral equation 
	\begin{equation}
\label{eq.integral}
  u(t)=\Phi_u(t)=e^{t\mathcal L}u_0 +\Psi_u(t)=e^{t\mathcal L}u_0 +\int _0^t e^{(t-s)\mathcal L}[ h(s) g_M F_\varepsilon(u_z )(s) ]ds.
\end{equation}
We prove that for suitable $A$ and $T$, $\Phi$ is a contraction in the space
\[
  X_{T,A}=\{u\in C([0,T],\wi), \|u\|_{X_{T,A}}=\max _{t\in [0,T]} \|u(t)\|_{\wi}\leq A\}.
\]
By Lemmas \ref{Lp} and \ref{Wp} we have
\begin{align*}
\label{}
  \|\Psi_u(t)-& \Psi_v(t)\|_{\wi} \\
  &\leq C\int _0^t  e^{(a+|\alpha|)^2(t-s)}(1+|t-s|^{-1/2}) \| h(s) g_M ( F_\varepsilon(u_z )(s)-  F_\varepsilon(v_z )(s)\|_{\li}ds\\
  &\leq C \|h\|_{L^\infty(0,T)} M  e^{(a+|\alpha|)^2T} \int _0^t (1+|t-s|^{-1/2})  \|F_\varepsilon(u_z )(s)-  F_\varepsilon(v_z )(s)\|_{\li}ds.
\end{align*}
Since $F_\varepsilon '(\theta)= p_2\theta (\varepsilon^2+\theta^2 )^{p_2/2-1}$, $p_2< 1$ we have $|F_\eps '(\theta)|\leq p_2\eps^{p_2-1}$ for all $\theta\in \rr$. It implies that
\[
   \|F_\varepsilon(u_z )(s)-  F_\varepsilon(v_z )(s)\|_{\li} \leq p_2\eps^{p_2-1} \|u-v\|_{\wi}
\]
and
\[
  \| \Phi_u -\Phi_v  \|_{X_{T,A}}\leq C(\eps )  e^{(a+|\alpha|)^2T}(T+\sqrt T) \|h\|_{L^\infty((0,T))} M \|u-v\|_{X_{T,A}}<\frac 12 \|u-v\|_{X_{T,A}},
\]
provided $T$ is small enough such that
\[
   C(\eps ) e^{(a+|\alpha|)^2T}(T+\sqrt T) \|h\|_{L^\infty((0,T))} M<\frac 12.
\]

Moreover, the same argument as above, taking $v=0$, shows that
\[
  \|\Phi_u\|_{X_{T,A}}\leq Ce^{(a+|\alpha|)^2  T}\|u_0\|_{\wi} +C(\eps) e^{(a+|\alpha|)^2T}(T+\sqrt T) \|h\|_{L^\infty((0,T))} M \|u\|_{X_{T,A}}.
\]
Choosing $A=4C\|u_0\|_{\wi}$, $T$ such that $e^{(a+|\alpha|)^2T}\leq 2$ and
\[
  C(\eps ) e^{(a+|\alpha|)^2T}(T+\sqrt T) \|h\|_{L^\infty((0,T))} M \leq \frac 12
\]
we obtain that $\Phi_u$ maps $X_{T,A}$ to itself. Applying Banach fix point theorem we obtain the existence of a unique local solution on the time interval $[0,T_0)$ with $T_0$ depending on $M$ and $\eps$. 

We now show that  the solutions are global. We exploit the fact that $F_\eps$ is a sublinear function. It satisfies 
 \begin{equation}\label{bound.F}
   |F_\eps(\theta)|\leq  C(p_2)(\eps ^{p_2}+|\theta|^{p_2})\leq  C(p_2)(1+|\theta|^{p_2}), \quad \forall \ \eps\in (0,1), \ \theta\in \rr.
\end{equation}
It implies that
\begin{align*}
\label{}
  \|\Psi_u(t)\|_{\wi}&\leq C\int _0^t  e^{(a+|\alpha|)^2(t-s)} (1+|t-s|^{-1/2}) \| h(s) g_M(z)  F_\varepsilon(u_z (s))\|_{\li}ds\\
  &\leq Ce^{(a+|\alpha|)^2T} M  \|h\|_{L^\infty((0,T))}  \int _0^t  (1+|t-s|^{-1/2})  \| 1+  |u_z (s)|^{p_2}\|_{\li}ds\\
  &\leq C\|h\|_{L^\infty((0,T))} e^{(a+|\alpha|)^2T}  M \sup _{z\in \rr} e^{-a|z|(1-p_2)}   \int _0^t  (1+|t-s|^{-1/2})  (1+ \|u(s)\|_{\wi}^{p_2} ).
\end{align*}
Since   $u$ is solution of the integral equation $u=\Phi_u$ we have for the maximal interval of existence 
 \[
   \|u(t)\|_{X_{T,A}}\leq e^{(a+|\alpha|)^2T}\Big[C\|u_0\|_{\wi}+ C \|h\|_{L^\infty((0,T))} M (T+\sqrt T) (1+\|u\|_{X_{T,A}}^{p_2})\Big].
\]
Since $p_2<1$ this excludes the blow-up in finite time.

Let us now show that for $a\geq |q_2|=\frac{p_2}{1-p_2}>p_2$
  the uniform estimate \eqref{est.sol} holds.
We  use  that  $|g_M(z)|\leq e^{-p_2z}$  (uniformly in $M>0$) and  the bound  \eqref{bound.F} holds uniformly in $\eps\in (0,1)$:
\begin{align*}
\label{}
 \|u(t)\|_{\wi}&= \|\Psi_u(t)\|_{\wi}\leq C_T\int _0^t    (1+|t-s|^{-1/2}) \| h(s) g_M(z) ( F_\varepsilon(u_z (s))\|_{\li}ds\\
  &\leq C_T   \|h\|_{L^\infty((0,T))}  \int _0^t  (1+|t-s|^{-1/2})  \|e^{-p_2z}(1+  |u_z (s)|^{p_2}\|_{\li}ds\\
  &\leq C_T \sup _{z\in \rr} e^{-a|z|(1-p_2)-p_2z}     \int _0^t  (1+|t-s|^{-1/2})  (1+ \|u(s)\|_{\wi}^{p_2} ).
\end{align*}
Again, since $p_2<1$ there is no blow-up in finite time and moreover \eqref{est.sol} holds.
 
Let us now prove property \eqref{prop.u.eps}. It is sufficient to observe that
\[
 {u}_*(t,z)=-\frac{1}{q_1}\exp(t\xi (\alpha+q_1)^2)\exp(q_1z)
\]
satisfies $  \partial_ tu_*-\mathcal Lu_*\leq 0\leq \partial_ tu_{\eps,M} -\mathcal Lu_{\eps,M}$. Since $e^{-a|z|}(u_{\eps,M}-u_*)(t)\in  L^\infty(\rr)$ we get $(u_{\eps,M}-u_*)(t,z)\gtrsim -\exp(-a|z|^2)$ and then by  the maximum principle \cite[Th.~9, p.~43]{Friedman-book-64} we obtain the desired property. 

Let us now prove property \eqref{monotone}.
Observe  that $\eta=u_z$ satisfies 
 \begin{equation}\label{sys.w.reg}
 \left\{
 \begin{array}{ll}
 \eta_t - \xi (\alpha^2\eta +2\alpha \eta_z+ \eta_{zz})=h(t) g_M'(z) F_\eps(\eta)+h(t)g_M(z) F_\eps'(\eta)\eta_z ,& z\in \rr,\\[10pt]
 	\eta(0,z)=-\exp( q_1  z). &
 \end{array}
\right.
\end{equation}
Since  $h(t)g_M(z) F_\eps'(\eta)$ is  a bounded function  and $h(t) g_M'(z) F_\eps(\eta) \leq 0$ a
we can apply the maximum principle \cite[Th.~9, p.~43]{Friedman-book-64} to obtain that $\eta(t,z)\leq 0$ for all $t>0$ and $z\in \rr$. 
The proof is now complete. 
\end{proof}

\subsection{Compactness and passage to the limit}\label{subsec:A.3}
Fix $R>0$ and set $I_R := (-R,R)$.
Through the section  $a > \frac {p_2}{1-p_2}, $  and   $u_{\varepsilon,M}$ solves \eqref{sys.v.reg}.  
We first prove  some estimates for the solutions, estimates which are uniform with respect to  the two parameters $\eps\in (0,1)$ and $M>0$.
\begin{lemma}
	\label{unif.H2}
	For any  $T$ and $R$ positive there exist a positive constant $C(T,R)$ such that the following estimates
	\begin{equation}
  \label{h2}
  \|u_{\eps,M}\|_{L^2((0,T),H^2(I_R))}\leq C(T,R),
\end{equation}
 	\begin{equation}
  \label{time.l2}
  \|\partial_t u_{\eps,M}\|_{L^2((0,T),L^2(I_R))}\leq C(T,R),
\end{equation}
hold uniformly for all $\eps>0$ and $M>0$. 
\end{lemma}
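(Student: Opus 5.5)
We need local $H^2$ and $H^1$-in-time estimates on $I_R$, uniform in $\eps,M$. The plan is to combine the uniform $\wi$ bound \eqref{est.sol} with interior parabolic energy estimates.

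\textbf{Step 1: uniform local bounds on $u$ and $u_z$ and on the forcing.} By Theorem~\ref{global.existence}(ii), $\sup_{t\le T}\|u_{\eps,M}(t)\|_{\wi}\le C_T$ uniformly in $\eps,M$. Hence on $I_{2R}$ we have
\[
|u_{\eps,M}|+|\partial_z u_{\eps,M}|\le C_T e^{2aR}.
\]
The right-hand side $G:=h(t)g_M(z)F_\eps(\partial_z u_{\eps,M})$ of \eqref{sys.v.reg} satisfies $|g_M|\le e^{-p_2 z}\le e^{2p_2R}$ on $I_{2R}$, and \eqref{bound.F} gives $|F_\eps(\theta)|\le C(1+|\theta|^{p_2})$. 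Therefore
\[
\|G\|_{L^\infty((0,T)\times I_{2R})}\le C(T,R),
\]
uniformly in $\eps\in(0,1)$ and $M>0$.

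\textbf{Step 2: interior energy estimate.} Take a cutoff $\chi\in C_c^\infty(I_{2R})$ with $\chi\equiv1$ on $I_R$. Multiply the equation $u_t-\xi(u_{zz}+2\alpha u_z+\alpha^2u)=G$ by $-\chi^2u_{zz}$ and integrate over $(0,t)\times\rr$.

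The time-derivative term gives
\[
\int_0^t\!\!\int -\chi^2u_tu_{zz}=\tfrac12\int\chi^2u_z^2\Big|_0^t+\int_0^t\!\!\int 2\chi\chi'u_tu_z.
\]
To avoid handling $u_t$ in the cross term, it is easier to estimate $u_t$ first. Alternatively, apply standard interior $L^2$ parabolic regularity to the heat equation $u_t-\xi u_{zz}=\tilde G$, where $\tilde G:=G+\xi(2\alpha u_z+\alpha^2u)$ is bounded in $L^\infty((0,T)\times I_{2R})$ by Step 1.

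Concretely, set $\phi:=\chi u$. It solves
\[
\phi_t-\xi\phi_{zz}=\chi\tilde G-\xi(2\chi'u_z+\chi''u),
\]
whose right-hand side is bounded in $L^2((0,T)\times\rr)$ by $C(T,R)$. The initial datum is $\phi(0)=\chi u_0\in H^1(\rr)$, since $u_0$ is smooth.

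The whole-line energy identity, obtained by multiplying by $-\phi_{zz}$ and by $\phi_t$ respectively, gives
\[
\|\phi_{zz}\|_{L^2((0,T)\times\rr)}+\|\phi_t\|_{L^2((0,T)\times\rr)}\le C\big(\|\phi(0)\|_{H^1}+\|\text{rhs}\|_{L^2}\big).
\]
Together with the $L^\infty$ bounds on $\phi$ and $\phi_z$, this yields \eqref{h2} and \eqref{time.l2}.

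\textbf{Step 3: justification.} The computations need enough regularity of $u_{\eps,M}$. For fixed $\eps,M$ the nonlinearity is Lipschitz and smooth, so $u_{\eps,M}$ is a classical smooth solution for $t>0$ (Friedman-type Schauder theory). The energy identities can also be justified by mollification, or by working on $[\delta,T]$ and letting $\delta\to0$ using continuity in $\wi$ and the smoothness of $u_0$.

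The main point to check is the uniformity in $\eps$ and $M$. This holds because every constant above depends only on \eqref{est.sol} and on \eqref{bound.F}, and neither involves the Lipschitz constant $\eps^{p_2-1}$ nor $M$.
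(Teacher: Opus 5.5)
Your proposal is correct and is essentially the paper's argument. Both rest on the uniform $\wi$ bound of Theorem~\ref{global.existence}, which controls $u_{\eps,M}$, $\partial_z u_{\eps,M}$ and the forcing on $I_{2R}$ independently of $\eps$ and $M$, followed by a cutoff $L^2$ energy estimate. The only difference is technical. You localize first ($\phi=\chi u$) and test with $\phi_t$ and $-\phi_{zz}$ on the whole line, which gives $\phi_t$ directly and avoids the cross term $\int\chi\chi' u_t u_z$. The paper instead tests the unlocalized equation against $(\eta^2 u_z)_z$ to control $u_{zz}$, then reads off $u_t$ from the equation.
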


\begin{proof}[Proof of Theorem \ref{th.for.u}.]
	For any  $T$ and $R$ positive Lemma \ref{unif.H2} gives a positive constant $C(T,R)$ such that the following 
	\begin{equation}
  \label{h2.der.z}
  \|\partial_z u_{\eps,M}\|_{L^2((0,T),H^1(I_R))}+  \|\partial_{tz} u_{\eps,M}\|_{L^2((0,T),H^{-1}(I_R))}\leq C(T,R),
\end{equation}
holds uniformly for all $\eps\in (0,1)$ and $M>0$. 

All these estimates allow us to use Aubin-Lions compactness criterion both for $u_{\eps,M}$ and
$\partial _z u_{\eps,M}$. It implies that up to a subsequence, $u_{\eps_k,M_k}\rightarrow u$   and $\partial_z u_{\eps_k,M_k}\rightarrow \partial_x u$ $L^2_{loc}((0,\infty)\times \rr)$. Moreover, $u_{\eps_k,M_k}\rightarrow u$ and $\partial_z u_{\eps_k,M_k}\rightarrow \partial_x u$ a.e. in  $ (0,\infty)\times \rr$. All these estimates allows us to pass to the limit in the weak formulation of \eqref{sys.v.reg} to obtain a solution $u\in C([0,\infty),\wi)$ of system \eqref{sys1}. Moreover, properties
\eqref{prop.u.eps} and 
  \eqref{monotone} transfer from $u_{\eps,M}$ to $u$.
\end{proof}

\begin{proof}[Proof of Lemma \ref{unif.H2}]
	By Theorem \ref{global.existence} we have 
	\[
	 \|u_{\epsilon,M}(T)\|_{\wi}\leq C(T,   \|u_0\|_{\wi}), \quad \forall\  T>0.
	\]
	This implies the local estimate
	 \begin{equation}
\label{local.h1}
  \|u_{\eps,M}\|_{L^2((0,T),H^1(I_R))}\leq C(T,R).
\end{equation}
	In order to prove estimate \eqref{h2} it is sufficient to prove 
	\begin{equation}
\label{second.derivate}
  \|\partial_{xx}u_{\eps,M}\|_{L^2((0,T),L^2(I_R))}\leq C(T,R).
\end{equation}
Moreover, since $|g_M(z)|\leq \exp(-p_2z) $ and $F_\eps(z)\leq (1+|z|)^{p_1}\lesssim 1+|z|$
  we automatically obtain that 
\begin{align*}
  \|\partial_t u_{\eps,M}&\|_{L^2((0,T),L^2(I_R))}  \leq  \|\mathcal{L}u_{\eps,M}+h(t)g_M(z) F_\eps(\partial_z u_{\eps,M})\|_{L^2((0,T),L^2(I_R))} \\
  & \leq \|u_{\eps,M}\|_{L^2((0,T),H^2(I_R))}+\|h\|_{L^\infty(0,T)}\|\exp(-p_2z) (1+|\partial_z u_{\eps,M}|)\|_{L^2((0,T),L^2(I_R))}\\
  &\leq (1+\|h\|_{L^\infty(0,T)}\exp(p_2R)) \|u_{\eps,M}\|_{L^2((0,T),H^2(I_R))}\leq C(T,R).
\end{align*}

In the following we prove estimate \eqref{second.derivate}. For simplicity we will not make explicit the dependence of $u$ by $\eps$ and $M$. 
Let $\eta\in C_c^\infty(I_{2R}) $ such that $\eta\equiv 1$ in $I_R$.  Multiplying the equation \eqref{sys.v.reg} by $(\eta^2 u_z)_z$, denoting $f_\varepsilon(t,z)
:= h(t)\,g_M(z)\,F_\varepsilon(u_z)$ and using integration by parts we obtain 
\begin{align*}
\frac{1}2\frac {d}{dt}\int_\rr \eta^2 u_z^2(t,z) \, dz = &\int _{\rr} \eta^2 u_{z}u_{zt}\, dz= -\int _{\rr} (\eta^2 u_{z})_zu_{t}\,dz   \\
=  &  -\int _{\rr} (\eta^2 u_{z})_z [f_\eps+\xi(u_{zz}+2\alpha u_z+\alpha^2 u) ]\, dz.
\end{align*}
Observe that 
\begin{align*}
I := \xi\int_{\rr} u_{zz} (\eta^2 u_z)_z\,dz
&=
\xi\int_{\rr} \eta^2 u_{zz}^2\,dz
+2\xi\int_{\rr} u_z u_{zz}\eta\eta_z\,dz\\
&\geq \xi\int_{\rr} \eta^2 u_{zz}^2\,dz
- \frac {\xi} 2\int_{\rr} \eta^2 u_{zz}^2\,dz
-2\xi
\int_{\rr} u_z^2 \eta_z|^2\,dz \\
&\ge
\frac {\xi}2
\int_{\rr} \eta^2 u_{zz}^2\,dz
-
2 \xi\int_{\rr} u_z^2 \eta_z^2\,dz .
\end{align*}
It follows that 
\[
\frac{1}2\frac {d}{dt}\int_\rr \eta^2 u_z^2(t,z) +\frac {\xi}2
\int_{\rr} \eta^2 u_{zz}^2\,dz\leq 2 \xi\int_{\rr} u_z^2 \eta_z^2\,dz-\int _{\rr} (\eta^2 u_{z})_z [f_\eps +\xi( 2\alpha u_z+\alpha^2 u) ]\, dz.
\]
In the last two terms we integrate by parts to get
\[
-\int _{\rr} (\eta^2 u_{z})_z ( 2\alpha u_z+\alpha^2 u) ]\, dz=2\alpha \int _{\rr} \eta^2 u_{z}u_{zz}+\alpha^2\int_\rr \eta^2u_z^2=\int _{\rr}u_z^2 (-2\alpha \eta \eta_z+\alpha^2 \eta^2)
\]
and
\[
\frac{1}2\frac {d}{dt}\int_\rr \eta^2 u_z^2(t,z) +\frac {\xi}2
\int_{\rr} \eta^2 u_{zz}^2\,dz\leq   \xi\int_{\rr} u_z^2 (2\eta_z^2-2\alpha \eta \eta_z+\alpha^2 \eta^2) \,dz-\int _{\rr} (\eta^2 u_{z})_z  f_\eps  \, dz.
\]
Finally we estimate the term involving $f_\eps$
\begin{align*}
\Big|  \int_{\rr} f_\varepsilon (\eta^2 u_z)_z\,dz\Big| &
=
\Big| \int_{\rr} f_\varepsilon \eta^2 u_{zz}\,dz
+2\int_{\rr} f_\varepsilon \eta\eta_z u_z\,dz\Big| \\
&\le
\frac{\xi}{4}\int_{\rr} \eta^2 u_{zz}^2\,dz
+ \frac{1}{\xi}
\int_{\rr} \eta^2 f_\varepsilon ^2\,dz +   \int_{\rr}  f_\varepsilon ^2 \eta^2\,dz
+  \int_{\rr}  u_z ^2  \eta_z ^2\,dz .
\end{align*}
Combining the above estimates and using that 
\[f_\varepsilon(t,z)
:= h(t)\,g_M(z)\,F_\varepsilon(u_z)\leq h(t) \exp(-p_2z)(1+|u_z|)\]
we obtain for any $0<t<T$ that
\begin{align*}
\frac{1}2\frac {d}{dt}\int_\rr &\eta^2 u_z^2(t,z) +\frac {\xi}4
\int_{\rr} \eta^2 u_{zz}^2\,dz \lesssim \int_{\rr} u_z^2 ( \eta_z^2 +\eta^2) \,dz+\int_{\rr} |f_\varepsilon|^2 \eta^2\,dz. \\
&\lesssim  \int_{\rr} u_z^2 ( \eta_z^2 +\eta^2) \,dz +C(R,T)\int_{\rr} (1+u_z^2) \eta^2\,dz.
\end{align*}
Integrating over $t\in(0,T)$ yields
\begin{align*}
\frac{1}{2}\int_{I_R} (u_z(z,T)^2- u_z(z,0)^2 )\eta^2 \,dz &+ \frac{\xi}{4} \int_0^T\!\!\int_{I_R}
   u_{zz} ^2 \eta^2\,dz\,dt\\
 &\leq \int_0^T\int_{\rr} u_z^2 ( \eta_z^2 +\eta^2) \,dz +C(R,T)\int_0^T\int_{\rr} (1+u_z^2) \eta^2\,dz.
 \end{align*}
This gives
\begin{align*}
\int_0^T\!\!\int_{\rr}  u_{zz} ^2 \eta^2\,dz\,dt
&\lesssim 
\int_{\rr}  u_z(z,0) ^2 \eta^2 \,dz
+
C(R,T)\int_0^T\!\!\int_{I_{2R}} (1+ u_z^2) \, dz.
\end{align*}
Using that  $\eta\equiv 1$ in $I_R$ and the uniform local estimate \eqref{local.h1} we obtain the desired estimate \eqref{second.derivate} for  the second derivative of $u_{\eps,M}$.
\end{proof}

 \subsection{Proof of Theorem \ref{th.for.w} } \label{subsec:proof of Thm tw}
  The existence follows from Theorem \ref{th.for.u}. Classical parabolic Schauder estimates show that $u\in C^{1+\gamma/2,2+\gamma}([0,\infty)\times \rr)$ for some $\gamma>0$. 
  The properties of $w$ are obtained from the ones on $u$ in Theorem \ref{th.for.u}. 
To prove positivity of ${w}$, we recall the change of variables
\[
w(t,y)=e^{ - \beta (T-t)}u(T-t,\log y),
\]

and using \eqref{prop.u} we  conclude

\begin{equation}\label{es1}
w(t,y) \geq w^*(t,y)=-\exp\left( (q_1-1)(\lambda+ \xi q_1)(T-t)\right)\frac{y^{q_{1}}}{q_{1}}>0,
 \end{equation}
which establishes \eqref{inf tw}. Let $a\geq|q_1|.$

By Theorem \ref{th.for.u}  
we obtain
\[
0<u(t,z)|\le C_T e^{a|z|}\qquad\text{for a.e. } (t,z) \in [0,T]\times \mathbb{R}.
\]

Rewriting the change of variables gives, for $y>0$ with $z=\log y$,
$$
w(t,y)=e^{ - \beta (T-t)}\,u(T-t,\log y)\leq C_T e^{a |\log y|}.
$$
If $0<y\le 1$, then $|\log y|=-\log y$ and
$e^{a|\log y|}=y^{-a}$, so $w(t,y) \le C_T y^{-a}.$

If $y \ge 1$, as ${w}\ge 0$ and ${w}$ is decreasing in $y$, 
$
0 \le  w(t,y) \le  w(t,1) \le \sup_{s\in[0,T]}  w(s,1)=: C_T'.
$
Combining the two cases gives
\begin{equation}\label{es2}
\tw(t,y)
\le
(C_T\vee C_T')(1+y^{-a})
\qquad
\forall (t,y)\in[0,T]\times(0,\infty),
\end{equation}
which finishes the proof.

\section*{Acknowledgments}

Work supported by NSERC under research grant RGPIN-2019-05397.\\
L. I.  Ignat  was partially supported by a grant of the Ministry of Research, Innovation, and Digitization, CCCDI -
UEFISCDI, project number ROSUA-2024-0001, within PNCDI IV.
\\
Special thanks to Ivar Ekeland who formulated this research problem.

\end{document}